\newtheorem{theorem}{Theorem}[section]
\newtheorem{rem}[theorem]{Remark}
\newenvironment{remark}{\begin{rem}\rm}{\end{rem}}
\newtheorem{proposition}[theorem]{Proposition}
\newtheorem{prop}[theorem]{Proposition}
\newtheorem{lemma}[theorem]{Lemma}
\newtheorem{corollary}[theorem]{Corollary}
\newtheorem{eg}[theorem]{Example}
\newtheorem{definition}[theorem]{Definition}
\newtheorem{conj}[theorem]{Conjecture}
\newtheorem{prob}[theorem]{Problem}
\newcommand{\R}{\mathbb{R}}
\newcommand{\C}{\mathbb{C}}
\newcommand{\Z}{\mathbb{Z}}
\newcommand{\E}{\mathcal{E}}
\newcommand{\dirac}{\mathsf{D}_b\!\!\!\!\!\!/\,\,\,}
\newcommand{\dob}{\mathsf{D}\!\!\!\!/}
\DeclareMathOperator{\supp}{Supp}
\DeclareMathOperator{\Lie}{Lie}
\DeclareMathOperator{\End}{End}
\DeclareMathOperator{\Ch}{Ch}
\DeclareMathOperator{\Str}{Str}
\DeclareMathOperator{\Id}{Id}
\DeclareMathOperator{\Td}{Td}
\DeclareMathOperator{\Tr}{Tr}
\DeclareMathOperator{\Th}{Th}
\DeclareMathOperator{\ind}{index}
\begin{document}
\title{An equivariant index formula in contact geometry}
\author{Sean Fitzpatrick
\\
Department of Mathematics\\University of Toronto}
\maketitle

\begin{abstract}
\noindent Given an elliptic action of a compact Lie group $G$ on a 
co-oriented contact 
manifold $(M,E)$ one obtains two naturally associated objects: 
A $G$-transversally elliptic operator $\dirac$, and an 
equivariant 
differential form with generalized coefficients $\mathcal{J}(E,X)$ defined 
in terms of a choice of contact form on $M$.

We explain how the form $\mathcal{J}(E,X)$ is natural with respect to the 
contact structure, and give a formula for the equivariant 
index of $\dirac$ involving $\mathcal{J}(E,X)$. 
A key tool is the Chern character with compact 
support developed by Paradan-Vergne \cite{PV1,PV2}.
\end{abstract}

\section{Introduction} Let $(M,E)$ be a compact contact manifold.  Suppose that the contact distribution $E$ is {\em co-oriented}, and let 
$\alpha\in\Omega^1(M)$ be a contact form 
compatible with the co-orientation.  The distribution $E$ is contact if 
and only if the restriction of $d\alpha$ to $E$ is symplectic.

We equip $E$ with a complex structure $J$ compatible with $d\alpha$,
defining an almost-Cauchy-Riemann (CR) structure $E_{1,0}\subset 
TM\otimes\C$ on $M$ whose underlying real sub-bundle is $E$ (see \cite{BG}).
We suppose $E^{0,1}=(\overline{E_{1,0}})^*$ is equipped with a compatible Hermitian 
metric $h$ and connection $\nabla$, and note the isomorphism 
$\psi:E^*\rightarrow E^{0,1}$ given in Section \ref{symbol}, equation (\ref{psi}) 
below.

Analogous to the definitions in \cite{BGV}, 
let $C(E)$ be the bundle of Clifford 
algebras over $M$ whose fibre at $x\in M$ is the Clifford algebra of 
$E^*_x$ with respect to the Riemannian metric on $E$ obtained from $h$.  
Then $\E = \bigwedge E^{0,1}$ is a spinor module for $C(E)$, with Clifford 
multiplication given for $\nu\in E_x$ by
\begin{equation}\label{clif}
\mathbf{c}(\nu) = \iota(\psi(\nu))-\epsilon(\psi(\nu)).
\end{equation}

Using the connection $\nabla$ and the Clifford multiplication (\ref{clif}) 
we can define a Spin$^c$-Dirac-like operator $\dirac$ whose principal 
symbol is 
$\sigma_b(x,\xi) = 
-i\mathbf{c}(q(\xi))$, where $(x,\xi)\in T^*M$, and $q:T^*M\rightarrow 
E^*$ denotes projection.\footnote{If the almost-CR 
structure associated to $E$ is integrable, then $M$ is a Cauchy-Riemann 
manifold, 
and $\dirac$ is given in terms of the associated tangential Cauchy-Riemann 
operator $\overline{\partial}_b$ by $\dirac = 
\sqrt{2}(\overline{\partial}_b + \overline{\partial}^*_b)$.}

Since $(\sigma_b)^2(x,\xi) = 
||q(\xi)||^2$, the support of $\sigma_b$ is the anihilator bundle $E^0$ 
of $E$, whence $\dirac$ is not elliptic. 

Suppose now that a compact Lie group $G$ acts on $M$, such that 
the action preserves the contact distribution $E$, as well as its 
co-orientation, and choose $\alpha$, $J$, $h$ and $\nabla$ to be $G$-invariant.
We require the action to be 
{\em elliptic}, meaning that $TM$ is spanned by $E$ and the vectors 
tangent to the $G$-orbits, or equivalently, that $E^0$ is transverse to 
the space $T^*_G M$ of covectors perpendicular to the $G$-orbits.

Thus, although $\dirac$ is not elliptic, the requirement of ellipticity
on the group action gives 
$\supp(\sigma_b)\cap T^*_GM\subset M\times \{0\}$, 
which implies that $\dirac$ is a $G$-transversally elliptic operator in 
the sense of Atiyah \cite{AT}, and that the principal symbol $\sigma_b$ is 
a $G$-transversally elliptic symbol in the sense of Berline-Vergne 
\cite{BV1}.

Atiyah \cite{AT} has shown that the kernel and cokernel 
of any $G$-transversally elliptic operator $P$ will define trace-class 
representations of $G$, and that 
the principal symbol of $P$ defines an element in the equivariant 
$K$-theory $K_G(T^*_GM)$.  The $G$-equivariant index of 
$P$ is well-defined, but only as a generalized fucntion on 
$G$, given by the formula \cite{AT,BV1}:
\begin{equation}
\ind^G(P)(g) = \Tr(g,\ker P) - \Tr(g,\ker P^*).\label{ind}
\end{equation}

Berline and Vergne \cite{BV1, BV2} have given a character formula which 
gives the germ of (\ref{ind}) at $g\in G$ 
in terms of 
the integral over $T^*M(g)$ of certain equivariant differential forms, as 
follows:

For a $G$-transversally elliptic symbol $\sigma$, we have, for $g\in G$ 
and $X\in\mathfrak{g}(g)$ sufficiently small,

\begin{equation}
\ind^G(\sigma)(ge^X) = \int\limits_{T^*M(g)}(2i\pi)^{-\dim M(g)} 
\frac{\Ch^g_{BV}(\sigma, X)\hat{A}^2(M(g),X)}{D_\R(N(g),X)},\label{bv}
\end{equation}

where $\Ch_{BV}^g(\sigma, X)$ is the Chern character of \cite{BV1}.
For a $G$-transversally elliptic operator $P$ with principal symbol 
$\sigma(P)$, the equality of generalized functions $\ind^G(P) = 
\ind^G(\sigma(P))$ was proved in \cite{BV2}.

Recent work of Paradan and Vergne \cite{PV2} allows one to replace the 
non-compactly supported equivariant forms in the integrand of (\ref{bv}) 
by forms with compact support, 
provided one passes to equivariant differential forms with generalized 
coefficients: these are $C^{-\infty}$ maps from $\mathfrak{g}$ to 
$\mathcal{A}(M)$, as in \cite{KV}.  The space of all such forms will be 
denoted by $\mathcal{A}^{-\infty}(\mathfrak{g},M)$.

When one allows generalized coefficients, it is possible to define a 
natural differential form on $M$ adapted to the contact structure as 
follows:

Let $\alpha$ be a contact form on $M$, let $D = d-\iota(X)$ be 
the 
equivariant 
differential, and  let $\delta_0$ 
be the Dirac delta distribution 
on $\R$.  Then we may define the form 
\begin{equation}
\mathcal{J}(E,X) = \alpha\wedge\delta_0(D\alpha(X)),\label{form}
\end{equation} 
which is well-defined as an element of 
$\mathcal{A}^{-\infty}(\mathfrak{g},M)$.

Moreover, using the properties of the delta distribution, one has that
\begin{enumerate}
\item $D\mathcal{J}(E,X) = 0$, so that $\mathcal{J}(E,X)$ defines a class 
in $\mathcal{H}^{-\infty}(\mathfrak{g},M)$, the 
equivariant cohomology of $M$ with generalized coefficients.

\item $\mathcal{J}(E,X)$ is independent of the choice of contact form 
$\alpha$ 
and 
thus depends only on the contact structure $E$.
\end{enumerate}

For a fixed $g\in G$, let $i:M(g)\rightarrow M$ denote the inclusion of 
the set of $g$-fixed points in $M$.  In Proposition \ref{prop1}, we show 
that $(M(g), E(g))$ is again a contact manifold, with contact form 
$\alpha^g = i^*\alpha$, so that $\mathcal{J}(E(g),X) = 
\alpha^g\wedge\delta_0(D\alpha^g(X))$ is again well defined, 
for $X\in 
\mathfrak{g}(g)\subset\mathfrak{g}$.

In this article, our interest in the form $\mathcal{J}(E,X)$ is due to 
its appearance in our formula 
for the equivariant index of the $G$-transversally elliptic operator 
$\dirac$.  The results of \cite{PV1,PV2} allow us to re-write the 
integrand of (\ref{bv}) in terms of a Chern character 
$\Ch_{MQ}(\sigma,X)$ with ``gaussian shape'' along the fibres of $E^*$ 
in the sense of \cite{MQ}, and a differential form $P_\lambda(X)$ with 
generalized coefficients whose support intersects $E^0$ in a compact set. We 
are then able to integrate over the fibres of $T^*M(g)$ to obtain:

\begin{theorem}
Let $(M,E)$ be a compact, co-oriented contact manifold, 
and let $G$ be a 
compact Lie group acting elliptically on $M$.
The $G$-equivariant index of $\dirac$ is the generalized 
function on $G$ whose germ at $g\in G$ is given,
for $X\in\mathfrak{g}(g)$ sufficiently small, by
\begin{equation}
\ind^G(\dirac)(ge^X) = \int\limits_{M(g)}(2\pi i)^{-k(g)} 
\frac{\Td(E(g),X)\mathcal{J}(E(g),X)}{D_\C(N(g),X)},
\label{a}
\end{equation}
where $\dim(M(g)) = 2k(g)+1$.
\end{theorem}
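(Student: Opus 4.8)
The plan is to start from the Berline--Vergne character formula~(\ref{bv}) applied to the transversally elliptic symbol $\sigma_b$, and then carry out a fibre integration over the cotangent directions of $M(g)$ that transform the non-compactly supported integrand into the right-hand side of~(\ref{a}). The first step is to invoke the results of Paradan--Vergne~\cite{PV1,PV2} to rewrite $\Ch^g_{BV}(\sigma_b,X)$ as a product $\Ch_{MQ}(\sigma_b,X)\wedge P_\lambda(X)$, where $\Ch_{MQ}$ is the Mathai--Quillen-type Chern character with Gaussian decay along the fibres of $E^*$ and $P_\lambda(X)$ is an equivariant form with generalized coefficients whose support meets the symbol support $E^0$ in a compact set; this is exactly the mechanism advertised in the introduction, and it is what makes the subsequent fibre integral converge.

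The second step is to restrict everything to the fixed-point set: by Proposition~\ref{prop1}, $(M(g),E(g))$ is again a co-oriented contact manifold with contact form $\alpha^g = i^*\alpha$, so all the structure ($J$, $h$, $\nabla$, hence the spinor module $\E$ and the operator $\dirac$) restricts, and the normal bundle $N(g)$ splits into its $E$-part and its $G$-orbit part. The key identity to establish here is that the Gaussian-shape Chern character $\Ch_{MQ}(\sigma_b,X)$, integrated over the fibre $E^*(g)$ of the cotangent bundle of $M(g)$, produces the equivariant Todd form $\Td(E(g),X)$ of the CR-bundle — this is the transversally-elliptic, contact analogue of the standard computation that the fibre integral of the Bott--Thom class of a $\bar\partial$-type symbol gives a Todd class. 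Simultaneously, the factor $P_\lambda(X)$, integrated over the remaining (transversal) directions, must be shown to collapse to $\mathcal{J}(E(g),X) = \alpha^g\wedge\delta_0(D\alpha^g(X))$; here one uses that the contact form supplies precisely the one extra covector direction $q(\xi)$ along which $\sigma_b$ fails to be elliptic, so the delta distribution $\delta_0(D\alpha^g(X))$ is what records the localization of $P_\lambda$ onto $E^0(g)\cap T^*_G M(g)$.

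The third step is bookkeeping of the remaining normal-bundle factors: the $\hat A^2(M(g),X)/D_\R(N(g),X)$ in~(\ref{bv}) must be reconciled with the $1/D_\C(N(g),X)$ in~(\ref{a}). Since the spinor module is the full exterior algebra $\bigwedge E^{0,1}$ rather than a half-spinor bundle, the $\hat A$-genus recombines with part of the symbol data into a Todd class, exactly as in the closed Spin$^c$ case (cf.~\cite{BGV}), and the normal directions of $E$-type versus orbit-type get sorted into the complex denominator $D_\C(N(g),X)$; one also tracks the change of normalizing constant from $(2i\pi)^{-\dim M(g)}$ to $(2\pi i)^{-k(g)}$, which is accounted for by the fibre integration eating half the dimensions plus the one contact direction. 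I expect the main obstacle to be the second step — making rigorous the claim that fibre-integrating the generalized-coefficient form $P_\lambda(X)$ yields precisely $\delta_0(D\alpha^g(X))$, rather than some cohomologous but messier distributional form. This requires care with the $C^{-\infty}$ dependence on $X$, with the pushforward of equivariant forms with generalized coefficients (in the sense of~\cite{KV}), and with checking that the fibre integral commutes with the equivariant differential $D$ so that the resulting class is the one represented by $\mathcal{J}(E(g),X)$; the independence of $\alpha$ and the closedness $D\mathcal{J}(E,X)=0$ noted in the introduction are what guarantee the answer is well-posed once this identification is made.
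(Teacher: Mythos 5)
Your overall strategy -- start from the Berline--Vergne formula, factor $\Ch^g_{BV}$ via Paradan--Vergne into a Mathai--Quillen piece times a compactly supported form $P_\lambda$, restrict to $M(g)$ using Proposition~\ref{prop1}, fibre-integrate, and bookkeep the normal bundle -- is exactly the route the paper takes, and you correctly identify the fibre integration of $P_\lambda$ against the delta distribution as the central technical computation (this is Lemma~\ref{l101}). That said, two points in your bookkeeping are misstated in a way that would derail the actual calculation.

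First, the claim that $\Ch_{MQ}(\sigma_b,X)$ ``integrated over the fibre $E^*(g)$\ldots produces the equivariant Todd form $\Td(E(g),X)$'' is incorrect. What the Paradan--Vergne Riemann--Roch identity gives (formula~(\ref{tdth}) in the paper) is
\[
\Ch_{MQ}(\sigma_b,X) = (2\pi i)^{k(g)}\,\pi^*_M\bigl(\Td(E^{0,1}(g),X)^{-1}\bigr)\,q^*\bigl(\Th_{MQ}(E^{0,1}(g))\bigr),
\]
so the fibre integral produces the \emph{inverse} Todd class of $E^{0,1}(g)$. The $\Td(E(g),X)$ in the numerator of~(\ref{a}) only appears after combining with $\hat{A}^2(M(g),X) = \Td(E^{1,0}(g),X)\,\Td(E^{0,1}(g),X)$; the $\Td(E^{0,1})$ factors cancel and what survives is $\Td(E^{1,0}(g),X)$, which the paper denotes $\Td(E(g),X)$. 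You do gesture at this in your third step, but the two steps as written are inconsistent with each other and you need to keep the inversion straight.

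Second, your account of the normal-bundle denominator is confused. You write that ``the normal directions of $E$-type versus orbit-type get sorted into $D_\C(N(g),X)$,'' but there are no orbit-type normal directions: the splitting $TM|_{M(g)} = E(g)\oplus\R Y^g\oplus N(g)$ and the $g$-invariance of the Reeb field force $N(g)\subset E|_{M(g)}$, so $N(g)$ is entirely ``$E$-type'' and inherits its complex structure from $J$. The actual mechanism is that the supertrace $\Str\bigl(g^\E\cdot j^*e^{\mathbb{F}}\bigr)$ in the restricted Chern character splits, by simultaneous diagonalization of the $g$-action and the curvature, into the Mathai--Quillen Chern character of $\sigma_b^g$ on $E^{0,1}(g)$ times $D_\C(\overline{N}(g),X) = \det_\C(1-g^N e^{-F_N})$; the paper isolates this in a short Lemma. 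That extra factor of $D_\C(\overline{N}(g),X)$ in the numerator is precisely what cancels half of $D_\R(N(g),X) = D_\C(N(g),X)D_\C(\overline{N}(g),X)$ in the Berline--Vergne denominator, leaving the $D_\C(N(g),X)$ in~(\ref{a}). Without explicitly extracting that supertrace factor, your reconciliation of $D_\R$ with $D_\C$ cannot close.
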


In particular, we have the following formula at the identity:
\begin{theorem}
For $X\in \mathfrak{g}$ sufficiently small,
\begin{equation}
\ind^G(\dirac)(e^X) = \frac{1}{(2\pi i)^n}\int\limits_M 
\Td(E,X)\mathcal{J}(E,X).\label{b}
\end{equation} 
\end{theorem}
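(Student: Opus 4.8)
The plan is to derive this as the special case $g = e$ of the main theorem (equation (\ref{a})). When $g = e$, the fixed-point set is all of $M$, so $M(g) = M$, the normal bundle $N(g)$ is the zero bundle, and hence $D_\C(N(g),X) = 1$. Moreover $\dim M = 2n+1$ gives $k(g) = n$, so the normalizing factor $(2\pi i)^{-k(g)}$ becomes $(2\pi i)^{-n}$. Finally $E(g) = E$ and $\alpha^g = i^*\alpha = \alpha$, so $\mathcal{J}(E(g),X) = \mathcal{J}(E,X)$. Substituting all of this into (\ref{a}) yields (\ref{b}) directly. So the only real content is to check that the specialization is legitimate and that all pieces reduce as claimed.

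First I would verify that the main theorem applies at $g = e$: the germ at the identity is exactly the germ defining the value $\ind^G(\dirac)(e^X)$ for small $X \in \mathfrak{g} = \mathfrak{g}(e)$, so no extra argument is needed for the domain of validity. Next I would spell out that for $g = e$ every $g$-fixed point is a fixed point, so $M(e) = M$ and consequently the ``transverse'' part of the construction is empty: $N(e) = 0$, and by the normalization conventions for the equivariant Euler-type class $D_\C$, one has $D_\C(0,X) = 1$. Then I would note $E(e) = E$ with its chosen $J$, $h$, $\nabla$, so $\Td(E(e),X) = \Td(E,X)$, and $i = \mathrm{id}_M$ forces $\alpha^e = \alpha$, hence $\mathcal{J}(E(e),X) = \alpha \wedge \delta_0(D\alpha(X)) = \mathcal{J}(E,X)$. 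Collecting terms gives the stated integral over $M$.

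The main (and essentially only) obstacle is bookkeeping rather than analysis: one must be careful that the integral in (\ref{a}) is an integral of an equivariant form \emph{with generalized coefficients} — the delta distribution $\delta_0(D\alpha(X))$ only makes sense after pairing against test functions on $\mathfrak{g}$ — so the ``specialization'' must be understood at the level of $\mathcal{A}^{-\infty}(\mathfrak{g},M)$, and one should confirm that restricting the generalized-coefficient identity (\ref{a}) from $\mathfrak{g}(g)$ to the full $\mathfrak{g}$ at $g = e$ is compatible with how $\ind^G(\dirac)$ is assembled from its germs. Since the theorem is stated as giving the germ at each $g$, and the germ at $e$ is by definition the object appearing in (\ref{b}), this compatibility is immediate. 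I would therefore present the proof as: ``Apply the preceding Theorem with $g = e$; then $M(g) = M$, $N(g) = 0$ so $D_\C(N(g),X) = 1$, $k(g) = n$, $E(g) = E$, and $\mathcal{J}(E(g),X) = \mathcal{J}(E,X)$, and (\ref{b}) follows.''
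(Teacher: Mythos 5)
Your specialization at $g = e$ is correct in every detail: $M(e) = M$, $N(e) = 0$ so $D_\C(N(e),X) = 1$, $k(e) = n$, $E(e) = E$, and $\mathcal{J}(E(e),X) = \mathcal{J}(E,X)$. If the fixed-point formula (\ref{a}) were established independently, then (\ref{b}) would indeed follow immediately by this specialization, and there is nothing wrong with the bookkeeping.

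However, this is a genuinely different route from the paper's, and in the paper's own logical architecture it would be circular. The paper proves the identity formula (\ref{b}) \emph{first} and \emph{directly}: it starts from the Berline--Vergne formula (\ref{BV}), rewrites $\hat A^2(M,X)$ as $\Td(E^{1,0},X)\Td(E^{0,1},X)$ using the splitting $TM = E\oplus\R$, substitutes the Mathai--Quillen/Thom-form factorization $\Ch_{BV}(\sigma_b,X) = P_\lambda(X)\Ch_{MQ}(\sigma_b,X)$, and then carries out the fiber integration. The key step is Lemma \ref{l101}, proved \emph{inside} the proof of Theorem \ref{t1}, which computes the fiber integral of $P_\lambda$ over the $\R$-factor $E^0$ and produces $2\pi i\,\mathcal{J}(E,X)$. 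The proof of the general fixed-point formula (\ref{a}) then explicitly reuses this: it states that ``integrating $P_{\lambda^g}$ over $\R$ proceeds the same as in the proof of Theorem \ref{t1}'' and cites Lemma \ref{l101}. So within this paper, Theorem \ref{main} is logically downstream of Theorem \ref{t1}, not the other way around. Your proposal treats (\ref{a}) as the primitive statement and (\ref{b}) as its corollary; the paper does the opposite, and for good reason, since the direct computation at the identity is where the new analytic content (the fiber integration of the generalized-coefficient form $P_\lambda$ and the appearance of $\mathcal{J}$) actually lives. It would be worth noting in your write-up that the ``apply (\ref{a}) with $g=e$'' argument is only legitimate if one has an independent proof of (\ref{a}); otherwise one must prove (\ref{b}) by the direct route.
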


In the case of an elliptic circle action on $M$, we can relate our index formula to the index of a Dirac operator on the quotient: Suppose that $M$ is a pre-quantum $U(1)$-bundle over a pre-quantizable Hamiltonian $G$-space $(B,\omega,\Phi)$ in the sense of \cite{GGK}. If $\tilde{\alpha}$ is a connection form satisfying the prequantization condition $iD\tilde{\alpha}(X) = \pi^*(\omega-\Phi(X))$, then $\alpha = i\tilde{\alpha}$ is a contact form on $M$, and the action of $G\times U(1)$ on $M$ obtained from the action of $G$ on $B$ and the principal $U(1)$-action is elliptic.

\begin{corollary}
Let $\dob$ denote the Dolbeault-Dirac operator acting on sections of $\bigwedge T^{0,1}B$, let $\mathbb{L} = M\times_{U(1)}\C$, and denote $\sigma_m = \sigma(\dob)\otimes \Id_{\mathbb{L}^{\otimes m}}$. Then we have the following equality of generalized functions on $G\times U(1)$:
\begin{equation*}
\ind^{G\times U(1)}(\dirac)(g,u) = 
\sum\limits_{m\in\Z}u^{-m}\ind^G(\sigma_{m})(g).
\end{equation*}
\label{ep}
\end{corollary}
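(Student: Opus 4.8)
The plan is to exploit the freeness of the $U(1)$-action. Since $M\to B$ is a principal $U(1)$-bundle, any $G\times U(1)$-invariant (transversally) elliptic operator decomposes along the $U(1)$-isotypic pieces of its section space, each of which is a space of sections of an associated bundle over $B$; the corollary should follow once these bundles and the reduced operators are identified. First I would record the geometric identifications coming from $\alpha=i\tilde\alpha$: as $\tilde\alpha$ is a connection form, the contact distribution $E=\ker\tilde\alpha$ is the horizontal bundle and $d\pi$ restricts to an isomorphism $E\cong\pi^*TB$; the degree-two part of the prequantization condition $iD\tilde\alpha(X)=\pi^*(\omega-\Phi(X))$ reads $i\,d\tilde\alpha|_E=\pi^*\omega$, so the $d\alpha$-compatible complex structure $J$ on $E$ is carried to the $\omega$-compatible complex structure on $B$. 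Hence $E^{0,1}\cong\pi^*T^{0,1}B$ and $\mathcal{E}=\bigwedge E^{0,1}\cong\pi^*\bigwedge T^{0,1}B$. With the sign conventions of the statement, the $U(1)$-weight space of $\Gamma(M,\mathcal{E})$ on which $u$ acts by $u^{-m}$ is then identified with $\Gamma(B,\bigwedge T^{0,1}B\otimes\mathbb{L}^{\otimes m})$, and since the principal symbol $\sigma_b(x,\xi)=-i\mathbf{c}(q(\xi))$ only involves the directions $E^*\cong\pi^*T^*B$, it restricts on this weight space to exactly $\sigma_m=\sigma(\dob)\otimes\Id_{\mathbb{L}^{\otimes m}}$ on $T^*B$.

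Next I would pass from symbols to indices. The $G\times U(1)$-action commutes with $\dirac$, so $U(1)$ preserves $\ker\dirac$ and $\ker\dirac^*$; by the previous step the $u^{-m}$-component of each is the kernel, respectively cokernel, of an operator on the compact manifold $B$ whose principal symbol is $\sigma_m$, hence a pair of finite-dimensional $G$-modules whose formal difference is the virtual module with character $\ind^G(\sigma_m)(g)$ (the equivariant index depending only on the symbol class in $K_G(T^*B)$). Summing the $U(1)$-characters yields, as an identity of generalized functions,
\begin{align*}
\ind^{G\times U(1)}(\dirac)(g,u) &= \sum_{m\in\Z}\bigl(\Tr(g,(\ker\dirac)_m)-\Tr(g,(\ker\dirac^*)_m)\bigr)u^{-m}\\
&= \sum_{m\in\Z}u^{-m}\ind^G(\sigma_m)(g),
\end{align*}
the series converging in the distributions on $G\times U(1)$ precisely because Atiyah's trace-class theorem for $\dirac$ \cite{AT} forces the coefficients $\ind^G(\sigma_m)(g)$ to grow at most polynomially in $m$.

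The step I expect to be delicate is the bookkeeping in the first paragraph: verifying that the reduced operator on $B$ really is an operator with symbol $\sigma_m$, with the correct power of $\mathbb{L}$ and no spurious twist. This means tracking how the $U(1)$-weight interacts with the isomorphism $\mathcal{E}\cong\pi^*\bigwedge T^{0,1}B$ and choosing $\nabla$ on $E^{0,1}$ compatibly with the pullback of the Chern connection on $T^{0,1}B$; it is here that the full prequantization condition (not just its degree-two part) is used, identifying the equivariant first Chern form of $\mathbb{L}$ with a multiple of $D\tilde\alpha$ and matching the $G$-structures. To bypass the operator-level comparison one may instead argue in equivariant $K$-theory: the free $U(1)$-action induces a decomposition of $K_{G\times U(1)}(T^*_{G\times U(1)}M)$ indexed by $\widehat{U(1)}=\Z$ with summands $K_G(T^*B)$, under which $[\sigma_b]\mapsto([\sigma_m])_{m\in\Z}$, and the compatibility of Atiyah's index map with this decomposition \cite{AT} gives the corollary at once.

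Finally, one can check the identity cohomologically by inserting the left side into the formula (\ref{a}) and each $\ind^G(\sigma_m)$ into the Atiyah--Singer--Berline--Vergne formula: since $\Ch(\mathbb{L}^{\otimes m},X)$ pulls back to $M$ as the exponential of a multiple of $m\,D\alpha(X)$ (by the prequantization condition), the series $\sum_{m\in\Z}u^{-m}\Ch(\mathbb{L}^{\otimes m},X)$ is geometric, and as a distribution along the $U(1)$-fibre it collapses by Poisson summation to the factor $\delta_0(D\alpha(X))$ appearing in $\mathcal{J}(E,X)=\alpha\wedge\delta_0(D\alpha(X))$ while simultaneously localizing the $B(g)$-integral onto $M(g,u)$, reproducing (\ref{a}).
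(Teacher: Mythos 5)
Your proposal is correct, but it takes a genuinely different route from the paper. The paper proves the corollary \emph{cohomologically}: it applies the main fixed-point formula (\ref{fpf}) to compute $\ind^{G\times U(1)}(\dirac)(ge^X,ue^{i\phi})$ as an integral over $M(g,u)$, computes each $\ind^G(\sigma_m)(ge^X)$ via the ordinary Atiyah--Singer--Berline--Vergne formula on the fixed components $B(g)^v$ using $\Ch_g(\mathbb{L}^{\otimes m},X)|_{B(g)^v}=v^m e^{im\omega(X)}$, and then sums over $m$; Poisson summation collapses the geometric series onto the component with $v=u$ and produces the distribution $\delta_0(\omega^g(X)-\phi)$, which is exactly what the fibre integral of $\mathcal J(E(g,u))$ gives under the prequantization identity $D\alpha^{g,u}(X,i\phi)=\pi^*\omega^g(X)-\phi$. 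Your main argument instead exploits the freeness of the $U(1)$-action to decompose $\Gamma(M,\mathcal E)$ (or the $K$-theory class $[\sigma_b]\in K_{G\times U(1)}(T^*_{G\times U(1)}M)$) into $U(1)$-weight components, identifying the $m$-th piece with $\sigma_m$ on $B$ and summing characters. This is more elementary in the sense that it bypasses Theorem \ref{main} entirely, going straight back to Atiyah's framework for free actions, and could therefore even serve as an independent consistency check on the main theorem. What the paper's route buys is that the corollary drops out almost immediately once the fixed-point formula is in hand, and the verification stays at the level of differential forms --- it sidesteps the operator-level bookkeeping you correctly flag as delicate (making sure the reduced operator on the weight-$m$ space really has symbol $\sigma(\dob)\otimes\Id_{\mathbb{L}^{\otimes m}}$ with no spurious twist, which hinges on the precise sign conventions in $\mathbb{L}=M\times_{U(1)}\C$ and on choosing $\nabla$ compatibly). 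Your third, ``cohomological check'' paragraph is in fact a compressed sketch of the paper's actual proof, so you already have both routes.
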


\section{Elliptic actions on contact manifolds}
Let $G$ be a compact Lie group, and let $M$ be a $G$-manifold.  We make 
use of the following notation:

\begin{definition}
The set $\bigcup_{x\in M}(T_x(G.x))^0 \subset 
T^\ast M$ of covectors orthogonal to the $G$-orbits will be denoted 
$T^*_GM$.
\end{definition}

\begin{definition}
Let $\eta\in\Omega^1 (M)$ be an invariant 1-form on a $G$-manifold $M$.  
We define 
the  $\boldsymbol{\eta}${\bf -moment map} to be the map $f_\eta: 
M\rightarrow 
\mathfrak{g}^\ast$ 
given by the pairing
\begin{equation*}
<f_\eta(m), X> = \eta_m(X_M(m)),
\end{equation*}
for any $X\in \mathfrak{g}$, where $X_M$ is the vector field on $M$ 
corresponding to $X$ via the infinitesimal action of $\mathfrak{g}$ on 
$M$.

We denote by $C_\eta$ the zero-level set $f_\eta^{-1}(0)\subset M$ of the 
$\eta$-moment map.\label{mo}
\end{definition}

For any $G$-space $V$, we will denote by $V(g)$ the 
subset of $V$ fixed by the action of an element $g\in G$.

\begin{remark}
Let $\theta$ be the canonical 1-form on $T^*M$, and consider the 
lift of the action of $G$ on $M$ to $T^*M$.  This action is Hamiltonian, 
and $f_\theta:T^*M\rightarrow \mathfrak{g}^*$ is the 
corresponding moment map.  We may describe the space $T^*_G 
M$ according to $T^*_G M = C_\theta$.
\end{remark}

Let $(M,E)$ denote a compact co-oriented contact manifold, and let $G$ 
be a compact Lie group acting on $M$ preserving the contact structure 
$E$, and the co-orientation.   
Choose a $G$-invariant contact form $\alpha$ compatible with the 
co-orientation.  That is, if we let $E^0_+$ be the connected 
component of $E^0\setminus 0$ that is positive with respect to the 
co-orientation, then $\alpha(M)\subset E^0_+$.
We will suppose such a choice of contact form has been fixed throughout 
the article.\footnote{A good exposition of this terminology can be found 
in \cite{ler}.}

\begin{remark}
Recall that the space $E^0_+\subset T^*M$ is a symplectic cone over the 
base $M$, called the {\em symplectization} of $M$.  The symplectic form on 
$E^0_+$ is the pullback under inclusion of the canonical symplectic form 
on $T^*M$.  The 
cotangent lift of an action of $G$ on $M$ preserving $E$ restricts to a 
symplectic action of $G$ 
on $E^0_+$ commuting with the natural $\R_+$ action.
\end{remark}

\begin{definition}
The action of $G$ 
on $(M,E)$ is said to be {\bf elliptic} if and only if $T^\ast_G M\cap 
E^0 = 0$.
\end{definition}
For the remainder of this article, we will impose this stronger condition 
on the action of $G$ on $M$.

\begin{remark}\label{mm}
The action of $G$ on $(M,E)$ is elliptic if 
the orbits of $G$ in $M$ are nowhere tangent to the contact distribution.  
Alternatively, if $\Phi:E^0_+\rightarrow \mathfrak{g}^*$ is the 
restriction of $f_\theta$ to $E^0_+$, then the action is elliptic if and 
only if zero is not in the image of $\Phi$.
\end{remark}
 
Associated to the chosen contact form $\alpha$ 
is the {\em Reeb vector field}, which is the vector field $Y\in 
\Gamma(TM)$ 
such that
\begin{equation*}
\iota(Y)\alpha =1 \:\:\mbox{ and } \quad \iota(Y)d\alpha = 0.
\end{equation*}

Accordingly, we obtain a splitting $TM = 
E\oplus \R Y$, dual to the splitting $T^\ast M = E^\ast \oplus \R\alpha$ 
given 
by the choice of contact form.

The following proposition is a key lemma for our proof of the fixed-point 
formula (\ref{a}):

\begin{proposition}\label{prop1}
Let $(M,\alpha)$ be a co-oriented contact manifold, and suppose $G$ is a 
compact group acting on $M$ elliptically.  For 
any $g\in G$, let $i:M(g)\rightarrow M$ denote inclusion of the $g$-fixed 
points.  Then we have:
\begin{enumerate}
\item The submanifold $M(g)\subset M$ is a
contact manifold, and $\alpha^g = i^*\alpha$ is a contact form on $M(g)$.

\item The action of the centralizer $G(g)$ of $g$ in $G$ on $M(g)$ is 
elliptic.
\end{enumerate}
\end{proposition}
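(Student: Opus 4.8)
The plan is to prove (1) by showing that $M(g)$ is transverse to the contact distribution in a way that makes $i^*\alpha$ contact, and then deduce (2) from the fact that ellipticity is detected by the $\alpha$-moment map (Remark \ref{mm}). For part (1), the key observation is that the Reeb vector field $Y$ is $G$-invariant (since $\alpha$ and $d\alpha$ are invariant and $Y$ is characterized by $\iota(Y)\alpha=1$, $\iota(Y)d\alpha=0$), hence $Y$ is tangent to $M(g)$ along $M(g)$. Likewise, because $J$ and the metric are $G$-invariant, the splitting $TM = E\oplus\R Y$ is $g$-invariant, so $T(M(g)) = T_xM(g)$ decomposes as $(E_x\cap T_xM(g))\oplus\R Y_x$ at each $x\in M(g)$. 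Setting $E(g) := E\cap T(M(g))$, I need to check that $E(g)$ is a genuine contact distribution, i.e. that $i^*d\alpha$ is nondegenerate on $E(g)$. Since $d\alpha$ restricted to $E_x$ is symplectic and $J$-compatible, and $g$ acts on $(E_x, d\alpha|_{E_x}, J_x)$ as a unitary transformation, the fixed subspace $E_x(g) = E_x^g$ is a complex subspace on which $d\alpha|_{E_x}$ restricts to a symplectic form (the standard fact that a symplectic form restricts nondegenerately to the fixed space of a compatible-unitary action). Hence $i^*d\alpha|_{E(g)}$ is symplectic, so $\alpha^g = i^*\alpha$ is a contact form on $M(g)$ and $E(g) = \ker\alpha^g$ is its contact distribution, with co-orientation inherited from that of $E$.

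For part (2), I would invoke Remark \ref{mm}: the $G(g)$-action on $(M(g),E(g))$ is elliptic if and only if zero is not in the image of the moment map $\Phi^g$ for the cotangent lift restricted to $(E(g))^0_+\subset T^*M(g)$. Equivalently, using the $\alpha^g$-moment map $f_{\alpha^g}:M(g)\to\mathfrak{g}(g)^*$ from Definition \ref{mo}, ellipticity of the $G(g)$-action means $f_{\alpha^g}$ is nowhere zero on $M(g)$. For $X\in\mathfrak{g}(g)$ and $x\in M(g)$, the vector field $X_{M(g)}$ is just the restriction of $X_M$ (the $G(g)$-action on $M(g)$ is the restriction of the $G$-action), and $X_M(x)\in T_xM(g)$ because $X\in\mathfrak{g}(g)$ commutes with $g$ and $x$ is $g$-fixed. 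Therefore
\begin{equation*}
\langle f_{\alpha^g}(x), X\rangle = (\alpha^g)_x(X_{M(g)}(x)) = \alpha_x(X_M(x)) = \langle f_\alpha(x), X\rangle,
\end{equation*}
so $f_{\alpha^g}(x)$ is the image of $f_\alpha(x)\in\mathfrak{g}^*$ under the restriction map $\mathfrak{g}^*\to\mathfrak{g}(g)^*$. Since the $G$-action on $M$ is elliptic, $f_\alpha(x)\neq 0$; but more is true — ellipticity says $\alpha$ never annihilates the \emph{entire} orbit directions, and I must rule out that $f_\alpha(x)$ happens to lie in the kernel of the restriction $\mathfrak{g}^*\to\mathfrak{g}(g)^*$, i.e. in the annihilator of $\mathfrak{g}(g)$.

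This last point is where the real work lies, and I expect it to be the main obstacle. The resolution should come from the $g$-invariance of $\alpha$: since $g\cdot\alpha = \alpha$ and $g\cdot x = x$, for any $X\in\mathfrak{g}$ one has $\langle f_\alpha(x), X\rangle = \langle f_\alpha(x), \mathrm{Ad}_g X\rangle$, so $f_\alpha(x)$ is $\mathrm{Ad}_g^*$-fixed, i.e. $f_\alpha(x)\in(\mathfrak{g}^*)^g$. Now $(\mathfrak{g}^*)^g$ is naturally identified with $\mathfrak{g}(g)^* = (\mathfrak{g}^g)^*$ — using an invariant inner product on $\mathfrak{g}$, the decomposition $\mathfrak{g} = \mathfrak{g}(g)\oplus\mathfrak{g}(g)^\perp$ is $\mathrm{Ad}_g$-stable and dualizes to $\mathfrak{g}^* = \mathfrak{g}(g)^*\oplus(\text{its complement})$, with $(\mathfrak{g}^*)^g$ sitting inside the $\mathfrak{g}(g)^*$ factor. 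Hence the restriction map $(\mathfrak{g}^*)^g\to\mathfrak{g}(g)^*$ is injective, so $f_\alpha(x)\neq 0$ forces $f_{\alpha^g}(x)\neq 0$. Running this for every $x\in M(g)$ shows $0\notin\mathrm{image}(f_{\alpha^g})$, which by Remark \ref{mm} is exactly ellipticity of the $G(g)$-action on $(M(g),E(g))$. I would close by remarking that the same argument, applied to the cotangent-lift description, shows $T^*_{G(g)}M(g)\cap E(g)^0 = 0$ directly, matching the definition.
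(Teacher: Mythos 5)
Your proof is correct and follows essentially the same strategy as the paper's for both parts: the invariant Reeb-field splitting plus the symplectic-fixed-subspace fact for (1), and the observation that $g$-fixedness forces the moment value into $(\mathfrak{g}^*)^g\cong\mathfrak{g}(g)^*$ for (2), where the paper phrases this on the symplectization $E^0_+$ via $\Phi=f_\theta|_{E^0_+}$ while you work directly with $f_\alpha$ on $M$. You are merely more explicit than the paper at the two spots where it is terse --- the $J$-unitary argument for nondegeneracy of $d\alpha^g$ on $E(g)$, and the $\mathrm{Ad}_g^*$-fixedness argument for why the projection $\mathfrak{g}^*\to\mathfrak{g}(g)^*$ loses no information.
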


\begin{proof}
(1) Let $TM = E\oplus\R Y$, where $Y$ is the Reeb 
vector field asociated to the $G$-invariant contact form $\alpha$, and 
thus $G$-invariant as well.

Denote by $N(g)$ the normal bundle to $M(g)$ in $M$.  Then we have that
\begin{equation*}
TM\vert_{M(g)} = TM(g)\oplus N(g) = E(g)\oplus\R Y^g\oplus N(g)
\end{equation*}
by the invariance of $Y$, where $Y^g = Y\vert_{M(g)}$ and $E(g)$ is the 
subset of $E$ fixed by the action of $g\in G$.

Choose any $m\in M(g)$.  Then we know 
that 
$d\alpha\vert_{E_m}$ is symplectic.  Moreover, the action of $G$ on the 
symplectic vector space $E_m$ preserves $\alpha$, and thus the symplectic 
structure, whence $E_m(g)$ is symplectic, with symplectic form 
$d\alpha^g = i^*d\alpha$.

Finally, since $\alpha(M(g))\subset T^*M(g) = (TM(g))^*$, and 
$\iota(Y^g)\alpha^g = 1$, we have $\ker(\alpha^g)\cap\R Y^g = 0$, 
whence $\ker(\alpha^g)\subset E(g)$, and a dimension count gives 
$\ker(\alpha^g) = E(g)$.

(2) Recall that the action of $G$ on $M$ is elliptic if and only if zero 
is not in the image of the moment map $\Phi:E^0_+\rightarrow 
\mathfrak{g}^*$ (Remark \ref{mm}).  Let $H=G(g)$, and let $\mathfrak{h}$ 
be the 
Lie algebra of $H$.

If $x\in E^0_+(g)$, we have by the equivariance of $\Phi$ that 
$\Phi(x)\in\mathfrak{h}^*$.  Thus, the corresponding moment map 
$\Psi:E^0_+(g)\rightarrow \mathfrak{h}^*$ for the action of $G(g)$ on 
$E^0_+(g)$ is simply the restriction of $\Phi$ to $E^0_+$ (the projection 
from $\mathfrak{g}^*$ to $\mathfrak{h}^*$ being redundant).

Since $\Psi = \Phi\vert_{E^0_+(g)}$, it follows that zero is not in the 
image of $\Psi$, and thus the action of $G(g)$ on $M(g)$ is elliptic. 
\qedhere
\end{proof}

\section{Definition of the operator $\dirac$}\label{symbol}
Let $\alpha\in\Gamma(E^0\setminus 0)$ be a given choice of $G$-invariant contact form 
on $(M,E)$.  Then $d \alpha\vert_E$ defines a symplectic structure on the 
fibres of $E$, so that $E$ is a symplectic vector bundle over $M$.  Let $J$ be a $G$-invariant complex structure on $E$.

If $\beta$ is any other contact form, then $\beta = e^f\alpha$ 
for some $f\in C^\infty(M)$.  Thus we have $d\beta = e^f d f\wedge\alpha 
+e^f d\alpha$, whence $d\beta\vert_E = e^f d\alpha\vert_E$.  Therefore, 
if $J$ is a complex structure on $E$ compatible with $d\alpha$, 
it is also compatible with $d\beta$, and thus depends only on the contact 
structure $E$. 

The pair $(E,J)$ determines an almost-CR structure 
$E_{1,0}$ on $M$ whose 
underlying real bundle is the contact distribution (see \cite{BG}).
Thus $E_{1,0}\cap\overline{E_{1,0}} = 0$, and so $E_{1,0}\oplus 
E_{0,1} = E\otimes \C$, where $E_{0,1} = \overline{E_{1,0}}$.
Let $E^{1,0} = (E_{1,0})^*$, giving the $G$-invariant decomposition
\begin{equation*}
E^\ast\otimes\C = E^{1,0}\oplus E^{0,1}
\end{equation*}
into $\pm i$-eigenspaces of $J$, where for $\eta\in T^*M$ and $\xi\in TM$ 
the induced almost-complex structure on $E^*$ is given by $J(\eta)(\xi) = 
\eta(J(\xi))$

Let $\psi:E^\ast\xrightarrow{\simeq}E^{0,1}$ be the isomorphism given by
\begin{equation}
\psi(\eta) = \eta + iJ(\eta).\label{psi}
\end{equation}
Let $p:E^*\rightarrow M$, $q:T^\ast M\rightarrow E^\ast$, and $\pi_M = p\circ q : 
T^\ast M\rightarrow M$ denote projections.
Let $h$ be the $G$-invariant Hermitian metric on $E^{0,1}$ determined by 
$J$ and $d\alpha$, and let $\nabla$ be a $G$-invariant Hermitian 
connection on $E^{0,1}$.
The metric $h$ determines an invariant Riemannian metric on $E$.  Let 
$C(E)\rightarrow M$ be the bundle whose fibre over $x\in M$ is the 
Clifford algebra of $E_x$ with respect to this metric.  For any $\nu\in 
E_x$, we have the Clifford multiplication given by
\begin{equation}
\mathbf{c}(\nu) = \iota(\psi(\nu))-\epsilon(\psi(\nu)),
\end{equation}
where $\epsilon(\eta)$ denotes exterior multiplication by $\eta$, and 
$\iota(\eta)$ denotes contraction with respect to $h$: $\iota(\eta,\xi) = 
h(\eta,\xi)$.
The multiplication $\mathbf{c}$ makes $\E=\bigwedge E^{0,1}$ into a spinor 
module for $C(E)$.

Using $\nabla$ and $\mathbf{c}$, we define a $G$-invariant 
differential operator $\dirac:\Gamma(\E)\rightarrow \Gamma(\E)$ by the composition
\begin{equation}\label{dirac}
\Gamma(\E)\xrightarrow{\nabla} \Gamma(T^*M\otimes\E) \xrightarrow{q} 
\Gamma(E^*\otimes\E)\xrightarrow{\mathbf{c}}\Gamma(\E).
\end{equation}
In the case that $E_{1,0}$ is integrable, $M$ is a Cauchy-Riemann manifold, and we have
\begin{equation*}
\dirac = \sqrt{2}(\overline{\partial}_b + \overline{\partial}^*_b),
\end{equation*}
where $\overline{\partial}_b$ is the tangential Cauchy-Riemann operator 
defined with respect to $E_{1,0}$ \cite{BG}.

Write $\E = \E^+\oplus \E^-$ with respect to the $\Z_2$-grading given by 
exterior degree, and let $\sigma_b :\pi^*_M\E^+\rightarrow \pi^*_M\E^-$ 
denote the principal symbol of $\dirac$, given for any $(x,\xi)\in T^*M$ and $\gamma\in\E^+$ by
\begin{equation}\label{sig}
\sigma_b(x,\xi)(\gamma) = -i\mathbf{c}(q(\xi))\gamma.
\end{equation}
For any morphism $\sigma$ on $\pi^*_M \E$, define 
\begin{equation*}
\supp(\sigma) = \{(x,\xi)\in T^*M|\sigma(x,\xi) \text{ is not invertible}\}.
\end{equation*}
Since 
$\sigma^2_b(x,\xi) = ||q(\xi)||^2$, we have $\supp(\sigma_b) = E^0$.
This implies that for an elliptic $G$-action on $M$, $\dirac$ 
is a $G$-transversally elliptic differential operator in the sense of 
Atiyah \cite{AT}, since $E^0\cap T^\ast_G M = 0$.

Therefore, $[\pi_M^\ast\E, \sigma_b]$ defines an equivariant 
$K$-theory class in $K_G(T^\ast_G M)$.  This class is independent of the 
almost-CR structure (since any two such structures with underlying real 
bundle $E$ are homotopic) and the 
Hermitian metric. A formula for the equivariant 
index of this class has been given by Berline and Vergne \cite{BV1, 
BV2}, but requires the integration of non-compactly supported forms on 
$T^\ast M$.

Following Paradan and Vergne \cite{PV2}, we will instead pass to 
equivariant differential forms with generalized coefficients, which will 
allow us to construct a compactly supported form whose integral over 
$T^\ast M$ agrees with that of the Berline-Vergne formula, and for which 
the integral over the fibres is easily carried out.

Before dealing with the technical details of this construction and the 
proof of our main theorem, we pause to consider two simple examples in 
which our index theorem may be applied.

\section{Examples}
The simplest examples of an elliptic group action on a contact manifold 
involve free circle actions.  A particularly simple example is discussed 
in \cite{V}: that of a circle acting on itself by multiplication.

\vspace{10pt}

{\bf Example 1: $S^1$}

Consider the circle $S^1 = \{e^{i\theta}|\theta\in\R\}$.  The form 
$d\theta$ is a contact form on $S^1$, with the zero section as the contact 
distribution.  The group $U(1) = \{e^{i\phi}\}$ acts freely on $S^1$ by 
multiplication.  The action is elliptic, since $T^*_G S^1 = 0$ (while $E^0 
= T^* S^1$).

Here, our operator is $\dirac=0$, and since $T^*_G S^1 = 0$, even the zero 
operator on $S^1$ is $U(1)$-transversally elliptic.
The $U(1)$-equivariant index is given simply by

\begin{equation*}
\ind^G(0)(e^{i\phi}) = \int\limits_{S^1}\mathcal{J}(\phi) = 
2\pi\delta_0(\phi) = \sum_{m\in\Z}e^{im\phi},
\end{equation*}
where the last equality is valid for $\phi$ sufficiently small, using the 
Poisson summation formula for $\delta_0$.

\vspace{10pt}
{\bf Example 2: $S^3$}

Let $M=S^3$ be the unit sphere in $\R^4$ 
with 
co-ordinates $(x_1, y_1, x_2, y_2)$, and consider the frame $\{X,Y,T\}$ 
for 
$TS^3$ given by

\begin{align*}
X&= x_2\frac{\partial}{\partial x_1}-y_2\frac{\partial}{\partial y_1} 
-x_1\frac{\partial}{\partial x_2}+y_1\frac{\partial}{\partial 
y_2}\\
Y&= -y_2\frac{\partial}{\partial x_1}- x_2\frac{\partial}{\partial y_1} + 
y_1\frac{\partial}{\partial x_2} +x_1\frac{\partial}{\partial y_2}\\ 
T&= y_1\frac{\partial}{\partial x_1}- x_1\frac{\partial}{\partial y_1} 
+y_2\frac{\partial}{\partial x_2}-x_2\frac{\partial}{\partial 
y_2}.
\end{align*}

A contact structure is given by $E=TS^3/\R T$.  If we let 
$\{\xi,\zeta,\alpha\}$ denote the corresponding co-frame, then 
$\alpha$ 
is a contact form on $S^3$.  In co-ordinates we have

\begin{equation*} 
\alpha = y_1dx_1-x_1dy_1+y_2dx_2-x_2dy_2,
\end{equation*}
and one readily sees that $\alpha(T) = x_1^2+y_1^2+x_2^2+y_2^2 = 1$, so 
that $T$ is the Reeb field for $\alpha$.

We let $U(1)$ act on $S^3$, with action given in complex 
co-ordinates as follows: identify 
$\R^4 \cong \C^2$ via $z_j = x_j + iy_j$, $j=1,2$.  The action of 
$e^{i\phi}\in U(1)$ on $\C^2$ by $e^{i\phi}\cdot (z_1,z_2) = 
(e^{i\phi}z_1,e^{i\phi}z_2)$ restricts to an action of $U(1)$ on 
$S^3$.  Let $\mathfrak{g} = i\R$ denote the Lie algebra of $G$, and note 
that the infinitesimal action of $\mathfrak{g}$ on $M$ is given by 
$i\phi\mapsto \phi T$. The orbits of the action are thus 
transverse to the contact distribution $E$, whence the action 
of $U(1)$ on $S^3$ is elliptic. 

The almost-CR structure on $M$ is given by taking $E_{1,0} = \C Z$, 
where $Z = \frac{1}{\sqrt{2}}(X+iY)$.
The corresponding covector in $E^*\otimes\C$ is $\theta = 
\frac{1}{\sqrt{2}}(\xi-i\zeta)$. 
The associated complex structure on $E$ comes from the 
complex structure on $\C^2$, and is given by $J(X) = -Y$ and $J(Y) 
= X$, so that $J(\xi) = \zeta$, and $J(\zeta) = -\xi$ on $E^*$.  
Since this structure is integrable, $M$ is a 
CR manifold, and $\dirac = \sqrt{2}(\overline{\partial}_b + 
\overline{\partial}^*_b)$.

Writing $\eta\in T^*_xS^3$ as $\eta = 
a\xi+b\zeta+c\alpha$, the symbol of $\dirac$ is given by
\begin{equation*}
\sigma_b(x,\eta) = 
-i\sqrt{2}((a+ib)\iota(\overline{Z}) - 
(a-ib)\epsilon(\overline{\theta})),
\end{equation*}
from which we see that $\sigma_b^2(x,\eta) = a^2 + b^2$.

Finally, for $\phi$ sufficiently small, the $U(1)$-equivariant index of $\dirac$ is given by
\begin{align}
\ind^{U(1)}(\dirac)(e^{i\phi}) &= 
\frac{1}{2\pi i}\int\limits_{S^3}\Td(E,\phi)\mathcal{J}(E,\phi) 
= 2\pi(\delta_0(\phi) - i\delta_0^\prime(\phi))\nonumber\\
&= \sum_{m\in\Z}(1-m)e^{im\phi} = \sum_{m\in\Z}\frac{1}{2\pi 
i}\int\limits_{S^2}\Td(S^2)e^{-im(\omega - \phi)},\label{rr}
\end{align}
since
\begin{equation*}
\mathcal{J}(E,\phi) = \alpha\wedge\delta_0(d\alpha - \phi) = 
\alpha(\delta_0(-\phi) + \delta_0^\prime(-\phi)d\alpha),
\end{equation*}
while, if $\pi:S^3\rightarrow S^2$ denotes the projection onto the orbit space, then
\begin{align*}
\Td(E,\phi) &= \Td(\pi^*TS^2,\phi) = \pi^*\Td(S^2)\\
	    &= \pi^*(1+i\omega) = 1+id\alpha.
\end{align*}

Note that the last equality in (\ref{rr}) is an instance of Corollary 
\ref{pb}.

\section{Equivariant differential forms with \newline generalized coefficients}

Let $N$ be a smooth manifold, not necessarily compact.  Let 
$\mathcal{A}^\infty(\mathfrak{g}, N)$ denote the complex of smooth 
equivariant differential forms on $N$; that is, the space of smooth maps 
$\alpha :\mathfrak{g} = \Lie(G) \rightarrow \mathcal{A}(N)$ that commute 
with the actions of $G$ on $\mathfrak{g}$ and $N$.

The space $\mathcal{A}^\infty(\mathfrak{g},N)$ is equipped with the 
equivariant differential $D$, given by
\begin{equation*}
(D\alpha)(X) = d(\alpha(X)) - \iota(X_N)\alpha(X),
\end{equation*}
for any $X\in\mathfrak{g}$, where $\iota(X_N)$ is contraction by the 
vector field on $N$ generated by the infinitesimal action of 
$X\in\mathfrak{g}$.  We have $D^2 = 0$ on 
$\mathcal{A}^\infty(\mathfrak{g},N)$, whence we can define the 
equivariant cohomology $\mathcal{H}^\infty(\mathfrak{g},N)$.

We can define further the complex $\mathcal{A}^{-\infty}(\mathfrak{g},N)$ 
of equivariant differential forms with generalized coefficients: this is 
the space of G-equivariant generalized functions from $\mathfrak{g}$ to 
$\mathcal{A}(N)$ \cite{KV}.  Thus, for any 
$\alpha\in\mathcal{A}^{-\infty}(\mathfrak{g},N)$ and any compactly 
supported test function $\phi\in C^\infty(\mathfrak{g})$, 
$\int_{\mathfrak{g}}\alpha(X)\phi(X)dX$ is a smooth differential form on $N$.

The equivariant differential $D$ extends to 
$\mathcal{A}^{-\infty}(\mathfrak{g},N)$, with $D^2 = 0$, and so it is 
possible to define the space 
the space $\mathcal{H}^{-\infty}(\mathfrak{g},N)$ of equivariant 
cohomology with generalized coefficients.  See \cite{KV,PV,V} for examples 
where this space is computed.

For non-compact $N$, we may consider as well the cohomology 
of (generalized) equivariant 
differential forms with compact support on $N$, denoted by 
$\mathcal{H}^{\pm\infty}_c(\mathfrak{g},N)$.

\subsection{Forms in $\mathcal{A}^{-\infty}(\mathfrak{g},N)$ derived from 
distributions on $\R$}

It will be useful in our 
computation of the equivariant index to work in terms of the following 
generalized functions on $\R$:
\begin{equation*}
\delta_+(x) = 
\frac{i}{2\pi}\lim_{\epsilon\rightarrow 
0}\frac{1}{x+i\epsilon},\quad 
\delta_-(x) = 
\frac{-i}{2\pi}\lim_{\epsilon\rightarrow 
0}\frac{1}{x-i\epsilon}.
\end{equation*}
Note that we have
\begin{equation*}
\delta_+(x) + \delta_-(x) = \frac{1}{\pi}\lim_{\epsilon\rightarrow 
0}\frac{\epsilon}{x^2 + \epsilon^2},
\end{equation*}
which we identify as the Dirac delta distribution $\delta_0(x)$, giving 
the first of the 
following identities:
\begin{equation} \delta_+ + \delta_- = \delta_0,\end{equation}\label{d1}
\begin{equation}-ix\delta_+(x) = ix\delta_-(x) = 1,\quad x\delta_0(x) =0, 
\label{d2}\end{equation} 
and for any $a\in\R\setminus\{0\}$, we have
\begin{equation}
a\delta_0(ax) = \begin{cases} \delta_0(x),& \text{if 
$a>0$}\\ -\delta_0(x),& \text{if $a<0$}\end{cases}, \quad 
a\delta_\pm(ax) = \begin{cases} \delta_\pm(x) & \text{if 
$a>0$}\\ \label{d3}
-\delta_\mp(x) & \text{if $a<0$}\end{cases}.
\end{equation}

The integral representations of these 
generalized functions given by
\begin{equation*}
\delta_+(x) = \frac{1}{2\pi}\int^\infty_0 e^{ixt} d t, \quad \delta_-(x) = 
\frac{1}{2\pi}\int^0_{-\infty} e^{ixt} d t
\end{equation*}
will be helpful in the computation of the index formulas to 
follow.

Distributions on $\R$, such as the above, can be used to define 
equivariant differential 
forms with generalized coefficients.  For example, 
given an invariant 1-form $\beta$ on $N$, the form $\delta_0(D\beta)$, 
which we may view as the oscillatory integral 
$\int^\infty_{-\infty}e^{-itD\beta} d t$, 
is well-defined as a generalized equivariant form wherever the pairing 
$X\mapsto \beta(X_N)$ is non-zero.

Let us explain in general how such a form is well-defined: Let $u\in 
C^{-\infty}(\R)$ be a 
distribution on $\R$. We may 
consider its pull-back by a smooth, proper map $h:\mathfrak{g}\rightarrow 
\R$, 
which will give 
a well-defined distribution $h^*u = u\circ h$ on $\mathfrak{g}$ provided 
\begin{equation}
h^*(WF(u))\cap (\mathfrak{g}\times\{0\})=\emptyset \subset 
T^*\mathfrak{g},\label{wf}
\end{equation}
where $WF(u)$ denotes the wavefront set of $u$ \cite{FJ}.

\begin{remark}\label{blah}
Note that for the resulting distribution on $\mathfrak{g}$, we 
have $WF(h^*u)\subset h^*WF(u)$.  Furthermore, for any derivative 
$u^{(j)}$ of $u$, we have $WF(u^{(j)})\subset 
WF(u)$, so that if a map $h$ satisfies the condition above with respect to 
$u$, it does so for all the derivatives of $u$ as well.
\end{remark}

Now, if $\beta$ is an invariant 1-form on $M$, then
for a fixed point $m\in M$, $f_\beta(m)$ gives us a linear map 
from 
$\mathfrak{g}$ to $\R$.  If $f_\beta(m)$ satisfies (\ref{wf}) 
for all $m\in M$, then we may set
\begin{equation}
u(D\beta)(X) = u(d\beta-f_\beta(X))= \sum_j 
\frac{u^{(j)}(-f_\beta(X))}{j!}(d\beta)^j,\label{defu}
\end{equation}
which is well-defined by Remark \ref{blah} 

On a contact manifold $(M,E)$ on which a Lie 
group $G$ acts elliptically, consider the form 
\begin{equation}
\mathcal{J}(E,X) = \alpha\wedge\delta_0(D\alpha(X)),\label{J}
\end{equation}
where $\alpha$ is any contact form.

The ellipticity hypothesis ensures that the pairing $X\mapsto \alpha(X_M)$ 
is non-zero:
We have 
$WF(\delta_0) = \{0\}\times (\R\setminus 0)$, while $f_\alpha^{-1}(0) = 
\emptyset$.  Thus for all $m\in M$, $\eta = f_\alpha(m)$ is non-zero, and 
\begin{equation*}
\eta^*(WF(\delta_0)) = \{(X, t\eta)\in \mathfrak{g}\times 
\mathfrak{g}^*|(\eta(X), t)\in \{0\}\times (\R\setminus 0)\}.
\end{equation*}
Since $t\eta$ is never zero, (\ref{wf}) is satisfied.

Using the properties of the delta distribution given above, we obtain the 
following: 

\begin{proposition}
Let $(M,\alpha)$ be a co-oriented contact manifold on which a Lie group 
$G$ acts elliptically. Then the form $\mathcal{J}(E,X)$ is equivariantly 
closed, 
and independent of the choice of contact form.
\end{proposition}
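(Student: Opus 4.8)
I would prove the two assertions separately, using only the Leibniz rule for $D$ and the elementary distributional identities (\ref{d2})--(\ref{d3}), interpreted through the Taylor-series definition (\ref{defu}).

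\smallskip

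\textbf{Equivariant closedness.} Since $\alpha$ has degree one, Leibniz gives
\[
D\mathcal{J}(E,X)=(D\alpha)\wedge\delta_0(D\alpha(X))-\alpha\wedge D\big(\delta_0(D\alpha(X))\big).
\]
The second term vanishes: $\alpha$ is $G$-invariant, so $D^2\alpha=-\mathcal{L}(X_M)\alpha=0$, i.e.\ $D\alpha(X)$ is $D$-closed, and differentiating the finite sum (\ref{defu}) term by term (using $d\,(d\alpha)^j=0$ and $\iota(X_M)d\alpha=-df_\alpha(X)$) shows directly that $D\big(\delta_0(D\alpha(X))\big)=0$. The first term vanishes by the identity $x\delta_0(x)=0$ of (\ref{d2}) applied in the functional-calculus sense: expanding by (\ref{defu}) and using $x\,\delta_0^{(j)}(x)=-j\,\delta_0^{(j-1)}(x)$ one checks coefficient by coefficient that $(D\alpha(X))\,\delta_0(D\alpha(X))=0$. (Equivalently, viewing $\mathcal{J}(E,X)=\int_{-\infty}^{\infty}\alpha\,e^{-itD\alpha(X)}\,dt$, one has $D(\alpha\,e^{-itD\alpha})=(D\alpha)e^{-itD\alpha}=i\,\tfrac{d}{dt}\,e^{-itD\alpha}$, whose $t$-integral is a vanishing boundary term.) Hence $D\mathcal{J}(E,X)=0$.

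\smallskip

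\textbf{Independence of the contact form.} Let $\beta$ be any other $G$-invariant contact form compatible with the co-orientation. As recalled in Section \ref{symbol}, $\beta=e^{f}\alpha$ with $f\in C^\infty(M)$; since $\alpha$ and $\beta$ are both invariant, $f$ is invariant, and $f_\beta=e^{f}f_\alpha$ is again nowhere zero, so that $\mathcal{J}(E,X)$ built from $\beta$ is well defined by the wavefront argument preceding the Proposition (cf.\ Remark \ref{blah}). The key computation is
\[
D\beta(X)=d(e^{f}\alpha)-e^{f}\,\iota(X_M)\alpha=e^{f}\,D\alpha(X)+e^{f}\,df\wedge\alpha .
\]
Expanding $\delta_0(D\beta(X))$ via (\ref{defu}) and wedging on the left by $\beta=e^{f}\alpha$, every term containing a factor $df\wedge\alpha$ also contains $\alpha\wedge(\cdots)\wedge\alpha$ and hence vanishes, because $\alpha\wedge\alpha=0$ (and $(df\wedge\alpha)^{\wedge 2}=0$). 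This leaves $\beta\wedge\delta_0(D\beta(X))=e^{f}\,\alpha\wedge\delta_0\big(e^{f}D\alpha(X)\big)$. Finally the positive-rescaling identity $a\,\delta_0(ax)=\delta_0(x)$ for $a>0$ from (\ref{d3}), again in the functional-calculus sense (here using $\delta_0^{(j)}(ay)=a^{-j-1}\delta_0^{(j)}(y)$) with $a=e^{f}>0$, gives $e^{f}\,\delta_0\big(e^{f}D\alpha(X)\big)=\delta_0(D\alpha(X))$, whence $\beta\wedge\delta_0(D\beta(X))=\alpha\wedge\delta_0(D\alpha(X))=\mathcal{J}(E,X)$. (It is exactly compatibility with the co-orientation, i.e.\ $e^{f}>0$, that prevents a sign; the opposite co-orientation would flip $\mathcal{J}(E,X)$.)

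\smallskip

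\textbf{Main obstacle.} The content is bookkeeping rather than ideas: one must be certain that the scalar identities $x\delta_0(x)=0$ and $a\delta_0(ax)=\delta_0(x)$ survive composition with the inhomogeneous equivariant form $D\alpha(X)$, whose components are the scalar $-f_\alpha(X)$ and the nilpotent $2$-form $d\alpha$. This is precisely what definition (\ref{defu}) is built to handle: nilpotence of $d\alpha$ makes $\delta_0(D\alpha(X))$ a finite Taylor sum in $d\alpha$, and each identity reduces to the corresponding identity for the derivatives $\delta_0^{(j)}$, legitimate by Remark \ref{blah}. In the write-up I would isolate these two term-by-term verifications as short lemmas, after which the two displayed manipulations above become fully rigorous.
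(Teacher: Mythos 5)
Your proof is correct and follows essentially the same route as the paper's: Leibniz plus $x\delta_0(x)=0$ for closedness, and $\beta=e^f\alpha$ together with the rescaling identity and $\alpha\wedge\alpha=0$ for independence. The paper's version is terser (it simply cites (\ref{d2}), (\ref{d3}) and (\ref{defu})), whereas you spell out why the scalar identities survive composition with the inhomogeneous form $D\alpha(X)$ via the Taylor expansion (\ref{defu}) — useful detail, but the same argument.
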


\begin{proof}
We have: 
\begin{equation*}
D(\alpha\wedge \delta_0(D\alpha)) = D\alpha\wedge\delta_0(D\alpha) = 0 
\:\mbox{ by (\ref{d2}) },
\end{equation*}
while if we change $\alpha$ to $e^f\alpha$ for some $f\in C^\infty(M)$ we 
have using (\ref{d3}) that
\begin{align*}
e^f\alpha\wedge \delta_0(D(e^f\alpha)) &= e^f\alpha\wedge\delta_0(e^f(d 
f\wedge \alpha + D\alpha))\\
&= \alpha\wedge\delta_0(df\wedge\alpha +D\alpha) = 
\alpha\wedge\delta_0(D\alpha),
\end{align*}
where in the last equality we have used (\ref{defu}), 
and the fact that $\alpha\wedge\alpha = 0$. 
\end{proof}

\begin{remark}
Recall \cite{GGK} that given a symplectic manifold $(N,\omega)$ of 
dimension 2n and a Hamiltonian action of a Lie group $G$ on $N$ with 
moment map $\Psi$ we may 
define the Duistermaat-Heckman distribution $u_{DH}$ on 
$C^\infty_c(\mathfrak{g}^*)$ by
\begin{equation*}
<\phi, u_{DH}> =\int\limits_{\mathfrak{g}^*}\phi u_{DH} = 
\frac{1}{(2\pi)^n}\int\limits_N (\Psi^*\phi) 
e^\omega.
\end{equation*}

The Fourier transform of $u_{DH}$ is given, for $h\in 
C^\infty_c(\mathfrak{g})$ by
\begin{equation*}
<\widehat{u_{DH}},h> = <u_{DH},\hat{h}> = 
\int\limits_{\mathfrak{g}}h(X)I(X)dX,
\end{equation*}
so that $\widehat{u_{DH}} = I(X)dX$, where
\begin{equation*}
I(X) = \int\limits_{\mathfrak{g}*}e^{-i<X,\xi>}u_{DH}(\xi) = 
\frac{1}{(2\pi)^n}\int\limits_N e^{-i<X,\Psi>}e^\omega = (2\pi 
i)^{-n}\int\limits_N 
e^{i\omega(X)}.
\end{equation*}

Now, given a co-oriented contact manifold $(M,E)$ of dimension $2n+1$, 
consider the 
anihilator $E^0$ of $E$.  Although not quite a symplectic manifold, since 
the form $\omega = d(t\alpha)$ is degenerate for $t=0$, we have 
the moment map $\Psi = tf_\alpha$, and if we compute $I(X)$ in this case, 
we find
\begin{equation*}
I(X) = \frac{1}{(2\pi i)^{n+1}}\int\limits_{E^0} e^{i\omega(X)}
=\frac{1}{(2\pi i)^n}\int\limits_M\mathcal{J}(E,X).
\end{equation*}
Similarly, on the symplectic manifold $E^0_+$ we obtain an expression for the Fourier transform of the Duistermaat-Heckman distribution by replacing $\delta_0$ by $\delta_+$.
\end{remark}

\subsection{Cohomology with support}
We give here a quick summary of the material in \cite{PV1} and \cite{PV2} 
that is relevant to the proof of our index theorem.

Suppose $F$ is a closed, $G$-invariant subset of $N$.
Then there are two cohomology spaces associated to $F$ defined in 
\cite{PV1}: $\mathcal{H}^\infty(\mathfrak{g},N,N\setminus F)$, the relative equivariant cohomology of $N$, 
 and $\mathcal{H}^\infty_F(\mathfrak{g},N)$, the equivariant cohomology 
with compact support in $N$.

Representatives of cohomology classes in the former are pairs $(\eta, \xi)$, 
where $\eta\in\mathcal{A}^\infty(\mathfrak{g},N)$ and
$\xi\in\mathcal{A}^\infty(\mathfrak{g},N\setminus F)$, that are closed 
under the relative equivariant differential $D_{rel}(\eta,\xi) = 
(D\eta,\eta|_{N\setminus F}-D\xi)$, while cohomology classes in the latter 
are defined as follows:

Let $U\subset N$ be any open, $G$-invariant subset containing $F$.  We may 
consider the spaces $\mathcal{A}^{\infty}_U(\mathfrak{g}, N)$ of 
equivariant differential forms with support contained in $U$, and their 
corresponding cohomology spaces 
$\mathcal{H}^{\infty}_U(\mathfrak{g},N)$.

If we have two open subsets $V$ and $U$ with $F\subset V\subset U$, the 
inclusion $\mathcal{A}^{\infty}_V(\mathfrak{g},N)\hookrightarrow 
\mathcal{A}^{\infty}_U(\mathfrak{g},N)$ induces a map
\begin{equation*}
f_{U,V}:\mathcal{H}^{\infty}_V(\mathfrak{g},N)\rightarrow 
\mathcal{H}^{\infty}_U(\mathfrak{g},N),
\end{equation*}
and so we obtain the inverse system 
$(\mathcal{H}^{\infty}_U(\mathfrak{g},N), f_{U,V}, U,V\in\mathcal{F}_F)$,
where $\mathcal{F}_F$ is the family of all open, $G$-invariant 
neighbourhoods of $F$, 
letting us define the space of cohomology with support contained 
in $F$ as 
the inverse limit of this system.

By \cite{PV2}, all of the above can be extended to equivariant cohomology 
with generalized coefficients, including the morphism
\begin{equation}
p_F: 
\mathcal{H}^{\pm\infty}(\mathfrak{g},N,N\setminus F)\rightarrow 
\mathcal{H}^{\pm\infty}_F(\mathfrak{g},N)\label{pF}
\end{equation} defined in \cite{PV1} as follows:

Let $U$ be any open, $G$-invariant neighbourhood of $F$, and choose a 
cutoff function $\chi\in C^\infty(N)^G$ with support contained in $U$, 
such that $\chi\equiv 1$ on a smaller neighbourhood of $F$.  Let 
$(\eta,\xi)$ represent a class in 
$\mathcal{H}^{\pm\infty}(\mathfrak{g},N,N\setminus F)$ (so that 
$\eta|_{N\setminus F} = D\xi$), and set 
\begin{equation} p^\chi(\eta,\xi) = \chi\eta+d\chi\xi.
\end{equation}

By Proposition 3.14 in \cite{PV1}, $p^\chi(\eta,\xi)$ is an 
equivariantly closed form with support in $U$, whose class 
$p_U(\eta,\xi)$ in 
$\mathcal{H}^{\pm\infty}_U(\mathfrak{g},N)$ does not depend on $\chi$.
Moreover, $f_{U,V}\circ p_V = p_U$, so that we may define $p_F(\eta,\xi)$ 
to be the element defined by taking the inverse limit over invariant 
neighbourhoods of $F$.

\begin{remark}
An element of 
$\mathcal{H}^{\pm\infty}_F(\mathfrak{g},N)$ in the image of $p_F$ may be 
represented in computations by one of the forms $p^\chi(\eta,\xi)$.
\end{remark}

If $F$ is compact, there is a natural map 
\begin{equation}
\mathcal{H}^{\pm\infty}_F(\mathfrak{g},N)\rightarrow 
\mathcal{H}^{\pm\infty}_c(\mathfrak{g},N)\label{cpct}.
\end{equation}
The composition of $p_F$ with (\ref{cpct}) defines a map denoted $p_c$ in 
\cite{PV1}.

In the case where $N$ is a $G$-equivariant vector bundle we introduce two 
other complexes of differential forms: the 
complexes $\mathcal{A}^{\pm\infty}_{rdm}(\mathfrak{g},N)$ of differential 
forms that are rapidly decreasing in mean:

\begin{definition}
Suppose $N\rightarrow B$ is a $G$-equivariant vector bundle over the 
compact base $B$, and suppose $\beta:\mathfrak{g}\rightarrow N$ is an 
equivariant differential form on $N$ (possibly with generalized 
coefficients).  We say that $\beta$ is {\bf rapidly decreasing in mean} 
if for any smooth, compactly supported density $\rho(X)$ on 
$\mathfrak{g}$, the differential form $\beta_\rho = \int_\mathfrak{g} 
\beta(X)\rho(X)d X$ and all its derivatives are rapidly decreasing along 
the fibres of $N\rightarrow B$.
\end{definition}

The equivariant differential $D$ is well-defined on 
$\mathcal{A}^{-\infty}_{rdm}(\mathfrak{g},N)$, and so we may define the 
cohomology space $\mathcal{H}^{-\infty}_{rdm}(\mathfrak{g},N)$.

Note that we have the inclusions
\begin{equation}
\mathcal{A}^{\infty}_{rdm}(\mathfrak{g},N)\hookrightarrow 
\mathcal{A}^{-\infty}_{rdm}(\mathfrak{g},N)\hookleftarrow
\mathcal{A}^{-\infty}_c(\mathfrak{g},N).\label{incl}
\end{equation}

\subsection{Chern characters}
Suppose $\E=\E^+\oplus \E^-$ is a $\mathbb{Z}_2$-graded $G$-equivariant 
vector bundle over $N$, and let $\mathbb{A}$ be a $G$-invariant superconnection on $\E$, 
in the sense of Quillen \cite{Q}; see also \cite{MQ} or \cite{BGV}.  Thus 
$\mathbb{A}$ is an odd invariant operator on $\mathcal{A}(N,\E)$ which preserves the 
$\mathbb{Z}_2$-grading of $\E$, and satisfies the derivation property
\begin{equation*}
\mathbb{A}(\omega s) = d\omega s + (-1)^{\deg\omega}\omega\mathbb{A}s,
\end{equation*}
for any form $\omega$ on $N$, and any section $s$ of $\E$.
One example is the superconnection $\mathbb{A} = \nabla + L$ on 
$\E$ considered in \cite{MQ}, where
$\nabla$ is a $G$-invariant connection on $\E$ in the usual sense, and $L$ is an odd 
$G$-invariant endomorphism of $\E$.

\begin{definition}[\cite{BGV, PV}]
Given a $G$-invariant superconnection $\mathbb{A}$ on $\E$, we define the {\bf moment} 
of 
$\mathbb{A}$ to be the map $\mu^\mathbb{A} :\mathfrak{g}\rightarrow 
\mathcal{A}(N,\End(\E))^+$ given by
\begin{equation*}
\mu^\mathbb{A}(X) = \mathcal{L}(X) - [\iota_{X_N}, \mathbb{A}],
\end{equation*}
where $\mathcal{L}(X) = [d, \iota_{X_N}]$ is the Lie derivative in the 
direction of $X_N$.  

In the case $\mathbb{A} = \nabla + L$ mentioned 
above, the moment of $\mathbb{A}$ becomes simply $\mu(X) = \mathcal{L}(X) 
- \nabla_X$.

The {\bf equivariant curvature} of $\mathbb{A}$ is the map 
$\mathbb{F}(\mathbb{A}): \mathfrak{g}\rightarrow 
\mathcal{A}(N,\End(\E))^+$ given 
by 
$\mathbb{F}(\mathbb{A})(X) = \mathbb{A}^2 + \mu^\mathbb{A}(X)$, and the 
{\bf 
equivariant Chern character} of $(\E, \mathbb{A})$ is the equivariant 
differential form $\Ch(\mathbb{A},X) = 
\Str(e^{\mathbb{F}(\mathbb{A})(X)})$.
\end{definition}

The equivariant Chern character is equivariantly closed, so that 
$\Ch(\mathbb{A},X)$ defines a class in 
$\mathcal{H}^{\infty}(\mathfrak{g},N)$ equal to the Chern character of 
$\E$.

Now, if we are given a smooth, $G$-equivariant morphism $\sigma: 
\E^+\rightarrow \E^-$, define $\sigma^*$ using an invariant Hermitian 
metric on $\E$.  Then the map
\begin{equation*}
v_\sigma = \begin{pmatrix}0&\sigma^*\\\sigma&0\end{pmatrix}
\end{equation*}
defines an odd Hermitian endomorphism of $\E$, and
we can associate to it a differential form given by 
\begin{equation}
\Ch(\mathbb{A}^\sigma,X) = \Ch(\mathbb{A}(\sigma,1),X) = 
\Str(e^{\mathbb{F}(\mathbb{A}, \sigma, 
1)(X)}),\label{MQ}
\end{equation}
where $\mathbb{A}(\sigma,t) = \mathbb{A}+itv_\sigma,$
and $\mathbb{F}(\mathbb{A}, \sigma, t)(X)$ is the equivariant curvature 
of $\mathbb{A}(\sigma, t)$. Explicitly, we have
\begin{equation}
\mathbb{F}(\mathbb{A}, \sigma, t)(X) = -t^2v^2_\sigma +it[\mathbb{A}, 
v_\sigma] + 
\mathbb{F}(\mathbb{A})(X).\label{F}
\end{equation}
We remark that in the non-equivariant setting, the Chern character 
(\ref{MQ}) is essentially the form considered by Mathai-Quillen 
\cite{MQ}, in the case where $N$ is a vector bundle.   
We will denote by 
$\Ch_{MQ}(\sigma,X)$ the corresponding equivariant Chern character studied 
in \cite{PV1}.

If we define as well the transgression form $\eta(\mathbb{A},\sigma, t) = 
-i\Str(v_\sigma e^{\mathbb{F}(\mathbb{A},\sigma,t)})$, then on 
$N\setminus\supp(\sigma)$, we have the well-defined 
equivariant differential form 
$\beta(\mathbb{A}, \sigma)\in\mathcal{A}^\infty(\mathfrak{g}, 
N\setminus\supp(\sigma))$ given by
\begin{equation*}
\beta(\mathbb{A}, \sigma) = \int^\infty_0 \eta(\mathbb{A}, \sigma, t)d 
t.
\end{equation*}
Then $\Ch(\mathbb{A})\vert_{N\setminus\supp(\sigma)} = 
D\beta(\mathbb{A}, \sigma)$ \cite{PV1}, so that
\begin{equation} 
(\Ch(\mathbb{A}),\beta(\mathbb{A}, \sigma))\in  
\mathcal{H}^\infty(\mathfrak{g}, N, N\setminus\supp(\sigma)).\label{rel}
\end{equation}
This is the {\em relative Chern character} $\Ch_{rel}(\sigma,X)$ of 
\cite{PV1}.
With $F=\supp(\sigma)$, we can use the map (\ref{pF}) to obtain a 
class
\begin{equation*}
\Ch_{sup}(\sigma,X)=c_F(\Ch_{rel}(\sigma,X))\in
\mathcal{H}^\infty_{\supp(\sigma)}(\mathfrak{g}, N),
\end{equation*}
which is independent of the superconnection $\mathbb{A}$, and can be 
represented in computations by an equivariant form
\begin{equation*}
c(\sigma,\mathbb{A},\chi) = \chi\Ch(\mathbb{A}) + d\chi\beta(\mathbb{A}, 
\sigma),
\end{equation*}
where $\chi\in C^\infty(N)$ is a $G$-invariant cutoff function equal to 1 
on a neighbourhood of $\supp(\sigma)$, with support contained in $U$, for 
some $G$-invariant neighbourhood of $\supp(\sigma)$.

\begin{remark}
If $\sigma$ is elliptic, so that $\supp(\sigma)$ is compact, then we 
obtain a class 
$\Ch_c(\sigma)\in\mathcal{H}^\infty_c(\mathfrak{g},N)$ under 
the the natural map (\ref{cpct}) given in the previous subsection.
\end{remark}

Furthermore, we have the following theorem \cite{PV2}:
\begin{theorem}\label{tpv1}
Suppose $N$ is $G$-equivariant vector bundle over a manifold $B$.
Then if $\sigma$ satisfies suitable growth conditions along the 
fibres of $N$ (see \cite{PV2}), the form $\Ch_{MQ}(\sigma,X)$ is an element 
of 
$\mathcal{A}^\infty_{rdm}(\mathfrak{g},N)$ and represents the image of the 
class 
$\Ch_{sup}(\sigma,X)\in\mathcal{H}^\infty_{\supp(\sigma)}(\mathfrak{g},N)$ 
in $\mathcal{H}^\infty_{rdm}(\mathfrak{g},N)$.

Moreover, if the fibres of $\pi:N\rightarrow B$ are oriented, and the 
action of $G$ preserves the orientation, then we have 
$\pi_*\Ch_{MQ}(\sigma,X) = \pi_*\Ch_{sup}(\sigma,X)$ in 
$\mathcal{H}^\infty(\mathfrak{g},B)$. 
\end{theorem}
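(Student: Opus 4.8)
The plan is to deduce this from two ingredients: the (equivariant) Mathai--Quillen Gaussian estimate of \cite{MQ,PV1}, which makes $\Ch_{MQ}(\sigma,X)$ rapidly decreasing in mean, and the transgression relating the superconnection $\mathbb{A}(\sigma,1)$ to $\mathbb{A}$, which identifies its cohomology class with $\Ch_{sup}(\sigma,X)$. Throughout, the ``suitable growth conditions'' mean that $\sigma$, hence $v_\sigma$, grows along the fibres of $\pi:N\to B$, so that $v_\sigma^2=\operatorname{diag}(\sigma^*\sigma,\sigma\sigma^*)$ is positive with lowest eigenvalue $c(x,\xi)\to\infty$ along each fibre; in particular $\sigma$ is invertible outside a fibrewise-bounded set, so $\supp(\sigma)$ is fibrewise compact. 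Recall from (\ref{F}) that $\mathbb{A}(\sigma,t)=\mathbb{A}+itv_\sigma$ has equivariant curvature $\mathbb{F}(\mathbb{A},\sigma,t)(X)=-t^2v_\sigma^2+it[\mathbb{A},v_\sigma]+\mathbb{F}(\mathbb{A})(X)$, and that the transgression $\eta(\mathbb{A},\sigma,t)=-i\Str(v_\sigma e^{\mathbb{F}(\mathbb{A},\sigma,t)})$ satisfies $\frac{d}{dt}\Ch(\mathbb{A}(\sigma,t),X)=-D\,\eta(\mathbb{A},\sigma,t)(X)$.

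\emph{Step 1: $\Ch_{MQ}(\sigma,X)\in\mathcal{A}^\infty_{rdm}(\mathfrak{g},N)$.} The $-t^2v_\sigma^2$ term of the curvature contributes a factor $e^{-v_\sigma^2}$ at $t=1$ whose fibrewise operator norm is $\le e^{-c(x,\xi)}$. I would expand $e^{\mathbb{F}(\mathbb{A},\sigma,1)(X)}$ by a Duhamel/Volterra series in the positive-form-degree part $it[\mathbb{A},v_\sigma]+\mathbb{F}(\mathbb{A})(X)$; this is a finite sum, each summand a product of finitely many factors built from $\nabla\sigma$, the curvature of $\nabla$, and the moment $\mu^{\mathbb{A}}$, sandwiched between exponentials $e^{-s_jv_\sigma^2}$ whose product is again dominated by decay of type $e^{-c(x,\xi)}$. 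Those factors grow at most polynomially along the fibres and polynomially in $X$, so after taking $\Str$ and pairing with any compactly supported density $\rho(X)$ on $\mathfrak{g}$, the Gaussian beats the polynomial: $\Ch_{MQ}(\sigma,X)_\rho$ and all its fibre derivatives are rapidly decreasing. That $D\Ch_{MQ}(\sigma,X)=0$ is the general fact that the equivariant Chern character of a superconnection is $D$-closed.

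\emph{Step 2: identification of the class.} Choose a $G$-invariant cutoff $\chi$, equal to $1$ near $\supp(\sigma)$ and supported in a fibrewise-bounded neighbourhood $U$. Then $c(\sigma,\mathbb{A},\chi)=\chi\Ch(\mathbb{A})+d\chi\,\beta(\mathbb{A},\sigma)$ has fibrewise-compact support, so it lies in $\mathcal{A}^\infty_{rdm}(\mathfrak{g},N)$ and, by construction, represents the image of $\Ch_{sup}(\sigma,X)\in\mathcal{H}^\infty_{\supp(\sigma)}(\mathfrak{g},N)$ under the natural map to $\mathcal{H}^\infty_{rdm}(\mathfrak{g},N)$ coming from (\ref{incl}). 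It remains to see that $\Ch_{MQ}(\sigma,X)-c(\sigma,\mathbb{A},\chi)$ is $D$-exact in the $rdm$ complex. Integrating the transgression identity from $0$ to $1$ gives $\Ch_{MQ}(\sigma,X)=\Ch(\mathbb{A},X)-D\!\int_0^1\!\eta(\mathbb{A},\sigma,t)\,dt$ globally on $N$, while the relation $\Ch(\mathbb{A})|_{N\setminus\supp(\sigma)}=D\beta(\mathbb{A},\sigma)$ preceding (\ref{rel}) yields $c(\sigma,\mathbb{A},\chi)=\Ch(\mathbb{A},X)-D\big((1-\chi)\beta(\mathbb{A},\sigma)\big)$. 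Subtracting, $\Ch_{MQ}(\sigma,X)-c(\sigma,\mathbb{A},\chi)=D\gamma$ with $\gamma=(1-\chi)\beta(\mathbb{A},\sigma)-\int_0^1\eta(\mathbb{A},\sigma,t)\,dt=-\chi\!\int_0^1\!\eta(\mathbb{A},\sigma,t)\,dt+(1-\chi)\!\int_1^\infty\!\eta(\mathbb{A},\sigma,t)\,dt$. Here $\chi\int_0^1\eta\,dt$ has fibrewise-compact support; and $(1-\chi)\int_1^\infty\eta\,dt$ is smooth on all of $N$ (the factor $1-\chi$ annihilates the singularity of $\int_1^\infty\eta\,dt$ along $\supp(\sigma)$) and is rapidly decreasing along the fibres, since for $t\ge1$ the integrand carries the factor $e^{-t^2v_\sigma^2}$, so $\int_1^\infty\eta(\mathbb{A},\sigma,t)\,dt$ is bounded by $(\text{polynomial})\cdot\int_1^\infty t^{\dim N}e^{-t^2c(x,\xi)}\,dt$, which decays faster than any power of $c(x,\xi)^{-1}$, hence faster than any power along the fibres. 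Thus $\gamma\in\mathcal{A}^\infty_{rdm}(\mathfrak{g},N)$ and $[\Ch_{MQ}(\sigma,X)]=[c(\sigma,\mathbb{A},\chi)]$ there, which is the first assertion.

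\emph{Step 3: fibre integration, and the main difficulty.} When the fibres of $\pi:N\to B$ are oriented and $G$ preserves the orientation, integration over the fibre is defined on $\mathcal{A}^\infty_{rdm}(\mathfrak{g},N)$ (the fibre integrals converge by rapid decrease), commutes with $D$ up to the usual degree shift, and descends to a map $\mathcal{H}^\infty_{rdm}(\mathfrak{g},N)\to\mathcal{H}^\infty(\mathfrak{g},B)$ restricting to the usual pushforward on the image of $\mathcal{H}^\infty_{\supp(\sigma)}(\mathfrak{g},N)$; applying $\pi_*$ to the equality of Step 2 gives $\pi_*\Ch_{MQ}(\sigma,X)=\pi_*\Ch_{sup}(\sigma,X)$ in $\mathcal{H}^\infty(\mathfrak{g},B)$. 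The analytic heart of the argument is Step 1 together with the decay estimate for $\int_1^\infty\eta\,dt$ in Step 2 --- turning ``$\sigma$ grows along the fibres'' into uniform Gaussian control of the Mathai--Quillen integrand and its transgressions, with constants uniform in the superconnection parameter $t$ and polynomial in $X\in\mathfrak{g}$. Once those estimates are in place, the cohomological bookkeeping in Steps 2--3 is formal.
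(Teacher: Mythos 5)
The paper does not prove Theorem~\ref{tpv1}: it is quoted verbatim from Paradan--Vergne \cite{PV2}, so there is no in-text argument to compare against. Your proposal is therefore a reconstruction of what must be shown, and it appears to be a sound one.

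Your Step~2 bookkeeping checks out: from $c(\sigma,\mathbb{A},\chi)=\chi\Ch(\mathbb{A})+d\chi\,\beta(\mathbb{A},\sigma)$ one gets $c(\sigma,\mathbb{A},\chi)=\Ch(\mathbb{A},X)-D\big((1-\chi)\beta(\mathbb{A},\sigma)\big)$ since $D\big((1-\chi)\beta\big)=-d\chi\wedge\beta+(1-\chi)D\beta$ and $(1-\chi)D\beta=(1-\chi)\Ch(\mathbb{A})$; integrating the transgression identity $\tfrac{d}{dt}\Ch(\mathbb{A}(\sigma,t),X)=-D\eta(\mathbb{A},\sigma,t)(X)$ from $0$ to $1$ gives $\Ch_{MQ}(\sigma,X)=\Ch(\mathbb{A},X)-D\int_0^1\eta\,dt$ (consistent with $\Ch(\mathbb{A})|_{N\setminus\supp\sigma}=D\beta$ as $t\to\infty$); and your rewriting
$\gamma=-\chi\int_0^1\eta\,dt+(1-\chi)\int_1^\infty\eta\,dt$
correctly isolates a fibrewise-compactly-supported piece and a globally smooth piece killed by $1-\chi$ near the singular set of $\int_1^\infty\eta\,dt$, both visibly in the $rdm$ complex. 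Steps~1 and 3 invoke exactly the analytic and formal facts one would expect: a Duhamel/Volterra expansion of $e^{\mathbb{F}(\mathbb{A},\sigma,1)}$ so that the Gaussian $e^{-v_\sigma^2}$ beats the polynomial growth of $\nabla\sigma$, $\mathbb{F}(\mathbb{A})(X)$ and the moment, uniformly after pairing with a compactly supported density in $X$; and the usual compatibility of $\pi_*$ with $D$ on rapidly decreasing forms.

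Two small caveats, neither fatal. First, you have sharpened the hypothesis to ``$\supp(\sigma)$ is fibrewise compact''; that is the natural reading and makes the construction of $\chi$ with fibrewise bounded support possible, but the paper leaves the growth condition deliberately vague (``see \cite{PV2}''), so you should flag that you are fixing one precise version. Second, the decay estimate for $(1-\chi)\int_1^\infty\eta\,dt$ needs the same polynomial-in-$X$ control as in Step~1 to land in $\mathcal{A}^\infty_{rdm}$ (not just fibrewise decay for a fixed $X$); you gesture at this by reducing to the same Gaussian-versus-polynomial comparison, but it is worth saying explicitly since the definition of $rdm$ is phrased via pairing with test densities $\rho(X)$.

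Overall: the approach is the expected one and the cohomological identification is carried out correctly; the remaining work is the quantitative estimates, which you have correctly identified as the analytic core and sketched plausibly.
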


We now move from forms with smooth coefficients to those with generalized 
coefficients, which will allow us to shrink the support of our Chern 
character by using a $G$-invariant 1-form to modify the superconnection.

In \cite{PV2} we see that the above results carry over to equivariant 
cohomology with generalized coefficients.

Let $\lambda\in\mathcal{A}^1(\mathfrak{g},N)$ be a $G$-invariant 1-form.
We use $\lambda$ to deform the part of 
our superconnection of exterior degree one, obtaining a new 
superconnection 
$\mathbb{A}^{\sigma,\lambda} = \mathbb{A}(\sigma, \lambda, 1)$, according 
to
\begin{equation*}
\mathbb{A}(\sigma, \lambda, t) = \mathbb{A} +it(\sigma 
+\lambda),\:\mbox{ for }\: t\in\R \:\mbox{ and }\: v_\sigma + \lambda = 
\begin{pmatrix} \lambda & \sigma \\ \sigma^\ast & \lambda \end{pmatrix}.
\end{equation*}

As before, we set $\mathbb{F}(\mathbb{A}, \sigma, \lambda, t) = 
(\mathbb{A} + 
it(v_\sigma + \lambda))^2 + \mu^\mathbb{A}$, so that 
$\mathbb{F}(\mathbb{A}, 
\sigma, \lambda, 1)$ is the equivariant curvature of $\mathbb{A}^{\sigma, 
\lambda}$, $\Ch(\mathbb{A}^{\sigma,\lambda}) = 
\Str(e^{\mathbb{F}(\mathbb{A},\sigma,\lambda,1)})$ is the associated 
character 
form, and 
\begin{equation*}\eta(\sigma, \lambda, \mathbb{A}, t) = -i\Str((v_\sigma + 
\lambda)e^{\mathbb{F}(\mathbb{A}, \sigma, \lambda, t)})
\end{equation*}
the transgression form.

Then we may define $\beta(\mathbb{A}, \sigma, \lambda) = 
\int^\infty_0 \eta(\mathbb{A}, \sigma, \lambda, t)d t$, which is now 
well-defined on $N\setminus(\supp(\sigma)\cap C_\lambda)$, but only as a 
differential form with generalized coefficients \cite{PV2}.

We thus obtain a class
\begin{equation*}
\Ch_{rel}(\sigma, \lambda) = (\Ch(\mathbb{A}),\beta(\mathbb{A}, \sigma, 
\lambda)) \in 
\mathcal{H}^{-\infty}(\mathfrak{g}, N, N\setminus(\supp(\sigma)\cap 
C_\lambda)),
\end{equation*}
giving us
\begin{equation}
\Ch_{sup}(\sigma, \lambda) = 
c_F(\Ch_{rel}(\sigma, \lambda))\in 
\mathcal{H}^{-\infty}(\mathfrak{g},N,N\setminus F),
\end{equation}
where $F = \supp(\sigma)\cap C_\lambda$.  The class 
$\Ch_{sup}(\sigma,\lambda)$ is independent of $\mathbb{A}$, and can be 
represented by a differential form
\begin{equation*}
c(\sigma, \lambda, \mathbb{A},\chi) = \chi\Ch(\mathbb{A}) + 
d\chi\beta(\sigma, \lambda, \mathbb{A}),
\end{equation*}
where $\chi\in C^\infty(N)^G$ is equal to 1 on a neighbourhood of 
$\supp(\sigma)\cap C_\lambda$, and has support contained in a 
$G$-invariant neighbourhood $U$ of $\supp(\sigma)\cap C_\lambda$.

As in the smooth case, we have the following \cite{PV2}:
\begin{theorem}\label{tpv2}
Suppose that $N$ is a $G$-equivariant vector bundle over a $G$-manifold 
$B$.  If $\sigma$ and $\lambda$ satisfy suitable growth conditions along 
the fibres of $N\rightarrow B$, then 
$\Ch(\mathbb{A}^{\sigma,\lambda})\in\mathcal{A}^\infty_{rdm}(\mathfrak{g},N)$ 
and 
$\beta(\mathbb{A},\sigma,\lambda)\in 
\mathcal{A}^{-\infty}_{rdm}(\mathfrak{g},N\setminus (\supp(\sigma\cap 
C_\lambda))$, and $\Ch(\mathbb{A}^{\sigma,\lambda})$ represents the image 
of 
$\Ch_{sup}(\sigma,\lambda)$ in 
$\mathcal{H}^{-\infty}_{rdm}(\mathfrak{g},N)$.

Moreover, if the fibres of $\pi:N\rightarrow B$ are oriented, and the 
action of $G$ preserves the orientation, then the morphism 
$\pi_*:\mathcal{H}^{-\infty}_{rdm}(\mathfrak{g},N)\rightarrow 
\mathcal{H}^{-\infty}(\mathfrak{g},B)$ is well-defined, and 
$\pi_*\Ch(\mathbb{A}^{\sigma,\lambda}) = \pi_*\Ch_{sup}(\sigma,\lambda)$ 
in 
$\mathcal{H}^{-\infty}(\mathfrak{g},B)$.
\end{theorem}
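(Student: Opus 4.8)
The plan is to run, in the category of equivariant forms with generalized coefficients, the argument that proves Theorem \ref{tpv1} in \cite{PV1} for smooth coefficients; the reasoning adapts almost verbatim, the one genuinely new input being a wavefront-set/oscillatory-integral analysis of the $t\to\infty$ behaviour of the transgression form. Concretely, I would establish three things in turn: (i) $\Ch(\mathbb{A}^{\sigma,\lambda})\in\mathcal{A}^\infty_{rdm}(\mathfrak{g},N)$; (ii) in $\mathcal{H}^{-\infty}_{rdm}(\mathfrak{g},N)$, $\Ch(\mathbb{A}^{\sigma,\lambda})$ is $D$-cohomologous to the representative $c(\sigma,\lambda,\mathbb{A},\chi)$ of $\Ch_{sup}(\sigma,\lambda)$; and (iii) fibre integration descends to a well-defined map $\pi_*$ on $\mathcal{H}^{-\infty}_{rdm}$, so that applying $\pi_*$ to (ii) yields the asserted identity on $B$.

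For (i) I would expand the equivariant curvature as in (\ref{F}): one finds $\mathbb{F}(\mathbb{A},\sigma,\lambda,1) = -(v_\sigma+\lambda)^2 + i[\mathbb{A},v_\sigma+\lambda] + \mathbb{F}(\mathbb{A})(X) - i\,\lambda(X_N)$, the last term being the contribution of the $\lambda$-deformation to the moment. Since the degree-zero part of $(v_\sigma+\lambda)^2$ is $v_\sigma^2 = \sigma^\ast\sigma\oplus\sigma\sigma^\ast = |\sigma|^2$, the supertrace $\Str e^{\mathbb{F}(\mathbb{A},\sigma,\lambda,1)}$ carries a Gaussian factor $e^{-|\sigma|^2}$ along the fibres of $N\to B$. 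The growth hypotheses on $\sigma$ and $\lambda$ are exactly what makes this factor, after integration against any compactly supported density $\rho(X)$ on $\mathfrak{g}$ and arbitrary differentiation, dominate the polynomial growth of the remaining terms; hence $\Ch(\mathbb{A}^{\sigma,\lambda})_\rho$ and all its derivatives decay rapidly along the fibres, i.e.\ $\Ch(\mathbb{A}^{\sigma,\lambda})\in\mathcal{A}^\infty_{rdm}$. The same estimate with $t$ in place of $1$ shows $\eta(\mathbb{A},\sigma,\lambda,t)$ is rapidly decreasing along the fibres locally uniformly in $t$, so $\gamma_1 := \int_0^1\eta(\mathbb{A},\sigma,\lambda,t)\,dt$ is $rdm$, and the standard transgression formula (cf.\ \cite{PV1}) gives $\Ch(\mathbb{A}^{\sigma,\lambda}) = \Ch(\mathbb{A}) - D\gamma_1$.

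For (ii) I would use that, with $F = \supp(\sigma)\cap C_\lambda$, on $N\setminus F$ one has $\Ch(\mathbb{A})|_{N\setminus F} = D\beta(\mathbb{A},\sigma,\lambda)$, where $\beta = \int_0^\infty\eta(\mathbb{A},\sigma,\lambda,t)\,dt$. The convergence of this last integral is precisely what forces the passage to generalized coefficients: where $\sigma$ is invertible the factor $e^{-t^2|\sigma|^2}$ produces genuine exponential decay in $t$, whereas on $\supp(\sigma)\setminus C_\lambda$ the $X$-dependent scalar part of $\mathbb{F}(\mathbb{A},\sigma,\lambda,t)$ is $-it\,\lambda(X_N)$ with $\lambda(X_N)\neq 0$, so the $t$-integral behaves like the oscillatory integrals $\frac{1}{2\pi}\int_0^\infty e^{\pm ixt}\,dt = \delta_\pm(x)$ and converges only distributionally in $X$, with wavefront set controlled by a condition of the form (\ref{wf}). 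Granting this, and writing $\chi$ for the cutoff of the statement, combining $\Ch(\mathbb{A}^{\sigma,\lambda}) = \Ch(\mathbb{A}) - D\gamma_1$, $\Ch(\mathbb{A})|_{N\setminus F} = D\beta(\mathbb{A},\sigma,\lambda)$, and $D((1-\chi)\beta) = -d\chi\wedge\beta + (1-\chi)D\beta$ yields
\begin{equation*}
\Ch(\mathbb{A}^{\sigma,\lambda}) - c(\sigma,\lambda,\mathbb{A},\chi) = D\xi, \qquad \xi = (1-\chi)\,\beta(\mathbb{A},\sigma,\lambda) - \gamma_1,
\end{equation*}
where $\xi$ is globally defined (the factor $1-\chi$ annihilates $\beta$ near $F$) and, by the estimates of (i) together with the distributional control of $\beta$, lies in $\mathcal{A}^{-\infty}_{rdm}(\mathfrak{g},N)$. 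Thus $\Ch(\mathbb{A}^{\sigma,\lambda})$ and $c(\sigma,\lambda,\mathbb{A},\chi)$ define the same class in $\mathcal{H}^{-\infty}_{rdm}(\mathfrak{g},N)$, and the latter represents $\Ch_{sup}(\sigma,\lambda)$ by construction; independence of the superconnection $\mathbb{A}$ follows from the usual interpolation argument between two superconnections, carried out now with generalized coefficients.

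For (iii) I would note that when the fibres of $\pi:N\to B$ are oriented and $G$ preserves the orientation, then for any $\beta\in\mathcal{A}^{-\infty}_{rdm}(\mathfrak{g},N)$ and any test density $\rho$ the fibre integral $\pi_*(\beta_\rho)$ is a well-defined smooth form on $B$, since $\beta_\rho$ is rapidly decreasing along the fibres; this makes $\pi_*:\mathcal{A}^{-\infty}_{rdm}(\mathfrak{g},N)\to\mathcal{A}^{-\infty}(\mathfrak{g},B)$ well-defined, and, fibre integration commuting with $d$ and with $\iota(X_N)$, it commutes with $D$ and passes to cohomology. Applying $\pi_*$ to the identity of (ii) then gives $\pi_*\Ch(\mathbb{A}^{\sigma,\lambda}) = \pi_*\Ch_{sup}(\sigma,\lambda)$ in $\mathcal{H}^{-\infty}(\mathfrak{g},B)$. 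I expect the main obstacle to be (ii): not the pointwise convergence of $\beta$ on $N\setminus F$, but the verification that the truncation $(1-\chi)\beta$ extends to a globally defined form that is rapidly decreasing \emph{in mean}. This requires controlling the $t\to\infty$ asymptotics of $\eta(\mathbb{A},\sigma,\lambda,t)$ by stationary-phase estimates uniform along the fibres of $N$ and over the base, together with uniform bounds on the relevant wavefront sets in the spirit of Remark \ref{blah} --- precisely the bookkeeping that does not arise in the smooth case of Theorem \ref{tpv1}.
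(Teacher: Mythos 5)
The paper does not prove Theorem \ref{tpv2}; it is quoted verbatim from Paradan--Vergne (the lead-in ``As in the smooth case, we have the following \cite{PV2}'') and used as a black box, so there is no paper-internal argument against which to compare. What you propose is, at a structural level, a faithful reconstruction of what the cited proof must do: the transgression bookkeeping checks out (with $\gamma_1 = \int_0^1 \eta\,dt$ one indeed gets $\Ch(\mathbb{A}^{\sigma,\lambda}) - \Ch(\mathbb{A}) = -D\gamma_1$, $\Ch(\mathbb{A})|_{N\setminus F} = D\beta$, and hence $\Ch(\mathbb{A}^{\sigma,\lambda}) - c(\sigma,\lambda,\mathbb{A},\chi) = D\bigl((1-\chi)\beta - \gamma_1\bigr)$), and step (iii) on $\pi_*$ is standard once $\mathcal{A}^{-\infty}_{rdm}$ is under control.

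That said, the proposal contains an acknowledged but genuine gap. You correctly identify that everything hinges on showing $\xi = (1-\chi)\beta(\mathbb{A},\sigma,\lambda) - \gamma_1 \in \mathcal{A}^{-\infty}_{rdm}(\mathfrak{g},N)$, i.e.\ that the $t\to\infty$ oscillatory behaviour of $\eta(\mathbb{A},\sigma,\lambda,t)$ can be integrated against test densities on $\mathfrak{g}$ with estimates that are uniform along the fibres of $N \to B$ and polynomially controlled there. You flag this as ``precisely the bookkeeping that does not arise in the smooth case'' and defer it to stationary-phase and wavefront-set arguments that you do not carry out. But this is not bookkeeping; it is the entire analytic content of the theorem and the reason passage to generalized coefficients is required at all. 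Without a proof of that uniform $rdm$ estimate (for which the growth hypotheses on $\sigma$ and $\lambda$ are the exact input), the $D$-exactness relation only holds in a space of distributional forms where fibre integration is not a priori defined, and the conclusion $\pi_*\Ch(\mathbb{A}^{\sigma,\lambda}) = \pi_*\Ch_{sup}(\sigma,\lambda)$ does not follow. So the proposal should be read as a correct outline of the route in \cite{PV2}, not a proof.
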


In the case of a trivial bundle, we can define a class $P_\kappa 
\in\mathcal{H}^{-\infty}_{C_\kappa}(\mathfrak{g}, N)$ via $P_\kappa = 
c_F(2\pi, \beta(\kappa))$, where $F=C_\kappa$ and
 $\beta(\kappa) = -i\kappa\int^\infty_0 
e^{itD\kappa}d t = -2\pi i\kappa\delta_+(D\kappa)$, so that $D\beta = 
2\pi$ by (\ref{d2}).  If $U$ is a $G$-invariant neighbourhood of 
$C_\kappa$ and $\chi\in 
C^{\infty}(U)^G$ is equal to 1 on a neighbourhood of $C_\kappa$, then 
$P_\kappa$ can be represented by the form $P_U(\kappa,\chi) = 2\pi\chi 
+d\chi\beta(\kappa)$.

\subsection{The case of a contact manifold}
We return now to the case of a compact, co-oriented contact manifold 
$(M,E)$.  Consider the complex vector bundle 
$p:E^{0,1}\rightarrow M$ obtained from a $G$-invariant almost-CR structure on $M$.  
Equip $E^{0,1}$ with a $G$-invariant Hermitian metric $h$ compatible with 
the symplectic structure on $E$ and the 
almost-CR structure.  Let $\nabla$ be a $G$-invariant Hermitian 
connection on $E^{0,1}$ 
and let $F(X)$ be its equivariant curvature.

The symbol $\sigma_b$ (\ref{sig}) on $\pi_M^*\E$ is just the pullback by 
$q:T^*M\rightarrow E^*$ of the equivariant morphism 
$\sigma^{\C}_{E^{0,1}}:p^*(\bigwedge E^{0,even})\rightarrow p^*(\bigwedge 
E^{0,odd})$ defined in \cite{PV1}.  Furthermore, we have $\sigma^*_b = 
\sigma_b$ with respect to the metric $h$, so that $v^2_{\sigma_b} = 
\sigma_b^2\Id$, giving $\Ch_{MQ}(\sigma_b,X)$ ``Gaussian shape'' along the 
fibres of $E^*$ as in \cite{MQ}.

If we define the equivariant Todd form of $(E^{0,1},\nabla)$ for 
$X\in\mathfrak{g}$ sufficiently small by
\begin{equation*}
\Td(E^{0,1},X) = 
\det\nolimits_\C\left(\frac{F(X)}{e^{F(X)}-1}\right),
\end{equation*}
then we have \cite{PV1}
\begin{equation*}
\Ch_{MQ}(\sigma^\C_{E^{0,1}},X) = (2\pi i)^n 
p^*(\Td(E^{0,1},X)^{-1})\Th_{MQ}(E^{0,1}) \text{ in } 
\mathcal{H}^\infty_{rdm}(\mathfrak{g},E^{0,1}),\label{thom}
\end{equation*}
where $\Th_{MQ}(E^{0,1})$ is an equivariant version of the Thom form 
defined in \cite{MQ}, and $n$ is the complex rank of $E^{0,1}$.

If we pull back the above result to $T^*M$, then we obtain
\begin{equation}
\Ch_{MQ}(\sigma_b,X) = (2\pi i)^n 
\pi^*_M(\Td(E^{0,1},X)^{-1})q^*(\Th_{MQ}(E^{0,1})),\label{tdth}
\end{equation}
where $\Ch_{MQ}(\sigma_b,X) = \Ch(\mathbb{A}^\sigma_b,X)$.

Let $\theta$ be the canonical 1-form on $T^*M$.
Since the action of $G$ on $M$ is assumed to be elliptic, we know that 
$F=\supp(\sigma_b)\cap C_\theta = T^*_GM\cap E^0 = \{0\}$ is compact.
Thus $\Ch_{sup}(\sigma_b,\theta)$ defines a class in 
$\mathcal{H}^{-\infty}_c(\mathfrak{g},T^*M)$ under the mapping 
(\ref{cpct}).

Denote by $\Ch_{BV}(\sigma_b,X)$ the Chern character of \cite{BV1}, given 
by
\begin{equation*}
\Ch_{BV}(\sigma_b,X) = \Str(e^{F(\mathbb{A},\sigma_b,\theta,1)(X)}),
\end{equation*}
which is an element of $\mathcal{A}^\infty_{rdm}(\mathfrak{g},T^*M)$.

By Theorem \ref{tpv2}, the images of $\Ch_{BV}(\sigma_b,X)$ and
$\Ch_{sup}(\sigma_b,\theta)$ under the maps induced 
by the 
inclusions (\ref{incl}) coincide in 
$\mathcal{H}^{-\infty}_{rdm}(\mathfrak{g},T^*M)$.

Simlarly, by Theorem \ref{tpv1}, a representative 
of $\Ch_{sup}(\sigma_b)$ in $\mathcal{H}^\infty_{rdm}(\mathfrak{g},T^*M)$, is given by $\Ch_{MQ}(\sigma_b,X)$, 
which we relate to $\Ch_{BV}(\sigma_b,X)$ using following two lemmas from 
\cite{PV2}:

\begin{lemma}\label{l3}
Let $\kappa$ be a $G$-invariant 1-form on $N$, and define $P_\kappa$ as 
above.  Then:
\begin{enumerate}
\item Under the natural map 
$\mathcal{H}^{-\infty}_{C_\kappa}(\mathfrak{g},N) \rightarrow 
\mathcal{H}^{-\infty}(\mathfrak{g}, N)$, the image of $P_\kappa$ is equal 
to 1.
\item $\Ch(\sigma, \kappa) = P_\kappa\wedge\Ch(\sigma)$ in 
$\mathcal{H}^{-\infty}_{\supp(\sigma)\cap C_\kappa}(\mathfrak{g},N)$.
\end{enumerate}
\end{lemma}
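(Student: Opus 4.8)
\emph{Plan of proof.} Both statements are established in \cite{PV2}; the argument runs as follows.

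\emph{Part (1).} The plan is to exhibit an explicit equivariant primitive away from $C_\kappa$. The image of $P_\kappa$ in $\mathcal{H}^{-\infty}(\mathfrak{g},N)$ is represented by $P_U(\kappa,\chi) = 2\pi\chi + d\chi\,\beta(\kappa)$ for any $G$-invariant $\chi$ that equals $1$ near $C_\kappa$, where $\beta(\kappa) = -2\pi i\,\kappa\,\delta_+(D\kappa)$. Since $\chi-1$ vanishes on a neighbourhood of $C_\kappa$ while $\beta(\kappa)$ is well-defined on $N\setminus C_\kappa$, the product $(\chi-1)\,\beta(\kappa)$ extends by zero to an element of $\mathcal{A}^{-\infty}(\mathfrak{g},N)$; applying $D$, using that $\kappa$ is odd, that $D(D\kappa)=0$, and that $D\beta(\kappa)=2\pi$ by the identities (\ref{d2}), gives
\begin{equation*}
D\big((\chi-1)\,\beta(\kappa)\big) = d\chi\wedge\beta(\kappa) + 2\pi(\chi-1) = P_U(\kappa,\chi) - 2\pi .
\end{equation*}
Hence the image of $P_\kappa$ is the constant $2\pi$, which in the normalization of \cite{PV2} is the unit of the cohomology ring; in particular it does not depend on $\kappa$.

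\emph{Part (2).} The crux is a curvature computation for the deformed superconnection $\mathbb{A}(\sigma,\kappa,t) = \mathbb{A} + it\,(v_\sigma + \kappa\,\Id)$. Since $\kappa\,\Id$ is a scalar $1$-form times the identity, hence \emph{central} in $\mathcal{A}(N,\End(\E))$, one has $(v_\sigma+\kappa\,\Id)^2 = v_\sigma^2$, $[\mathbb{A},\kappa\,\Id] = d\kappa\,\Id$, and the moment of $\mathbb{A}(\sigma,\kappa,t)$ exceeds that of $\mathbb{A}$ by $-it\,(\iota(X_N)\kappa)\,\Id$; feeding these into (\ref{F}) gives
\begin{equation*}
\mathbb{F}(\mathbb{A},\sigma,\kappa,t)(X) = \mathbb{F}(\mathbb{A},\sigma,t)(X) + it\,D\kappa(X)\,\Id,
\end{equation*}
the last summand central. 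Therefore $e^{\mathbb{F}(\mathbb{A},\sigma,\kappa,t)} = e^{itD\kappa}\,e^{\mathbb{F}(\mathbb{A},\sigma,t)}$, so that $\Ch(\mathbb{A}^{\sigma,\kappa},X) = e^{iD\kappa(X)}\,\Ch(\mathbb{A}^{\sigma},X)$, and the transgression form splits as
\begin{equation*}
\eta(\mathbb{A},\sigma,\kappa,t) = e^{itD\kappa}\,\eta(\mathbb{A},\sigma,t) \;-\; i\,e^{itD\kappa}\,\kappa\wedge\Str\big(e^{\mathbb{F}(\mathbb{A},\sigma,t)}\big),
\end{equation*}
with the two summands coming from the $v_\sigma$-slot and the $\kappa$-slot of $v_\sigma+\kappa\,\Id$.

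It then remains to integrate over $t\in[0,\infty)$ and recognize the result as a product. Using the wavefront-set condition (\ref{wf}) for $\kappa$ and the growth estimates of Theorems \ref{tpv1}--\ref{tpv2} to justify interchanging $\int_0^\infty$ with $\Str$ and the limit $t\to\infty$, the $\kappa$-slot integral $-i\,\kappa\int_0^\infty e^{itD\kappa}\,\Str(e^{\mathbb{F}(\mathbb{A},\sigma,t)})\,dt$ has leading term (replacing $\Str(e^{\mathbb{F}(\mathbb{A},\sigma,t)})$ by its value $\Ch(\mathbb{A})$ at $t=0$) exactly $\beta(\kappa)\wedge\Ch(\mathbb{A})$, its remaining contribution localizing — via the Gaussian shape of $\Str(e^{\mathbb{F}(\mathbb{A},\sigma,t)})$ along $\supp(\sigma)$ — onto a representative of $\beta(\kappa)\wedge\Ch_{sup}(\sigma)$; the $v_\sigma$-slot integral reduces, modulo $D_{rel}$-exact forms, to a term built from $\beta(\mathbb{A},\sigma)$. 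Matching the resulting relative cocycle against the cup-product cocycle for $P_\kappa\wedge\Ch_{rel}(\sigma)$ in the relative equivariant cohomology of \cite{PV1} (for which the support of a product is the intersection of supports), and pushing forward by $c_F$ with $F = \supp(\sigma)\cap C_\kappa$, yields the asserted equality in $\mathcal{H}^{-\infty}_{\supp(\sigma)\cap C_\kappa}(\mathfrak{g},N)$. More conceptually — and this is the route of \cite{PV2} — one identifies the datum $(\E,\mathbb{A},\sigma,\kappa)$ with the pullback along the diagonal $N\hookrightarrow N\times N$ of the external tensor product of $(\E,\mathbb{A},\sigma)$ on the first factor with the rank-one datum $(\underline{\C},d,0,\kappa)$ on the second, and invokes multiplicativity of the support Chern character under tensor products together with the identification of the class of $(\underline{\C},d,0,\kappa)$ with $P_\kappa$, which is essentially part (1).

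\emph{Main obstacle.} Part (2): one must show that the $t$-integrated transgression of the $\kappa$-deformed superconnection genuinely splits into the pieces that reassemble into $P_\kappa\wedge\Ch_{sup}(\sigma)$, while keeping all the oscillatory integrals in $t$ convergent as forms with generalized coefficients — precisely where the condition (\ref{wf}) on $\kappa$ is used to keep $\beta(\kappa)$, $\beta(\mathbb{A},\sigma,\kappa)$, and their transgressions well-defined off $C_\kappa$. The tensor-product route trades this for verifying that the multiplicativity of the relative Chern character from \cite{PV1} carries over verbatim to $1$-form-deformed superconnections with generalized coefficients and is compatible with diagonal pullback and intersection of supports.
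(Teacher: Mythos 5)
The paper does not prove Lemma~\ref{l3}; it is quoted verbatim, together with Lemma~\ref{l6}, from \cite{PV2} as an external input (``\ldots using the following two lemmas from \cite{PV2}''). There is therefore no paper-internal proof to compare against, and your reconstruction has to be assessed on its own.

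Part~(1) of your argument is correct and complete: $(\chi-1)\beta(\kappa)$ extends by zero across $C_\kappa$, and $D\big((\chi-1)\beta(\kappa)\big)=d\chi\wedge\beta(\kappa)+2\pi(\chi-1)=P_U(\kappa,\chi)-2\pi$, so the image of $P_\kappa$ in $\mathcal{H}^{-\infty}(\mathfrak{g},N)$ is the constant $2\pi$. You are right to flag the normalization: with this paper's convention $P_\kappa=c_F(2\pi,\beta(\kappa))$ the image is genuinely $2\pi$, not $1$, and that is what is used downstream (Lemma~\ref{l101} gives $2\pi i\,\mathcal{J}$, the extra factor being absorbed by the $(2\pi i)^{-(2n+1)}$ prefactor in (\ref{BV})); the ``$=1$'' in the statement is the normalization of \cite{PV2}.

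Part~(2): your algebraic set-up is correct. Since $\kappa\Id$ is odd, squares to zero, and anticommutes with $v_\sigma$, one has $(v_\sigma+\kappa\Id)^2=v_\sigma^2$; the moment acquires the extra term $-it\big(\iota(X_N)\kappa\big)\Id$; and $\mathbb{F}(\mathbb{A},\sigma,\kappa,t)(X)=\mathbb{F}(\mathbb{A},\sigma,t)(X)+it\,D\kappa(X)\Id$ with the last summand central, which gives the factorization of the exponential and the two-term splitting of the transgression form $\eta(\mathbb{A},\sigma,\kappa,t)$. However, the concluding step --- that the $t$-integral of this split transgression, paired with $\Ch(\mathbb{A})$ and pushed through $c_F$ for $F=\supp(\sigma)\cap C_\kappa$, recovers the cup-product cocycle of $(2\pi,\beta(\kappa))$ and $(\Ch(\mathbb{A}),\beta(\mathbb{A},\sigma))$ --- is asserted, not proved. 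There is no cohomological identity that lets you ``replace $\Str(e^{\mathbb{F}(\mathbb{A},\sigma,t)})$ by its value at $t=0$,'' and you do not verify that the remaining contribution is a relative coboundary. What is missing is either an explicit double-transgression primitive (a homotopy in $t$ together with a partition-of-unity parameter realizing the relative cup product of \cite{PV1}), or a verified multiplicativity statement for the $\lambda$-deformed relative Chern character with generalized coefficients under external tensor with the rank-one datum $(\C,d,0,\kappa)$; your ``more conceptual'' sketch names this but does not carry it out. You are candid about this in your ``main obstacle'' paragraph: part~(2) stops short of a proof at precisely that point.
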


\begin{lemma}
\label{l6}
If $\theta\vert_{\supp(\sigma)} = \lambda\vert_{\supp(\sigma)}$, then 
$\supp(\sigma)\cap C_\theta = \supp(\sigma)\cap C_\lambda = T^\ast_G M\cap 
E^0$, and $\Ch(\sigma, \theta) = \Ch(\sigma, \lambda)$ in 
$\mathcal{H}^{-\infty}_{T^\ast_G M\cap E^0}(\mathfrak{g},T^\ast M)$.
\end{lemma}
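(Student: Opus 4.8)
The plan is to handle the two assertions separately — the set-theoretic equality of supports directly, and the cohomological identity by a linear homotopy of $1$-forms. For the first, recall from Definition \ref{mo} that the value $f_\eta(m)\in\mathfrak{g}^*$ of the $\eta$-moment map at a point $m$ depends only on the covector $\eta_m$; hence the hypothesis $\theta|_{\supp(\sigma)}=\lambda|_{\supp(\sigma)}$ forces $f_\theta|_{\supp(\sigma)}=f_\lambda|_{\supp(\sigma)}$, so that $\supp(\sigma)\cap C_\theta=\supp(\sigma)\cap f_\theta^{-1}(0)=\supp(\sigma)\cap f_\lambda^{-1}(0)=\supp(\sigma)\cap C_\lambda$. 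Combining this with $\supp(\sigma)=E^0$ (Section \ref{symbol}) and with $C_\theta=T^*_G M$ for the canonical $1$-form $\theta$ (the remark following Definition \ref{mo}) gives $\supp(\sigma)\cap C_\theta=\supp(\sigma)\cap C_\lambda=T^*_G M\cap E^0$, which, by ellipticity of the action, is the zero section and in particular compact. Write $F=T^*_G M\cap E^0$ for this common set.

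For the cohomological identity, recall that $\Ch(\sigma,\kappa)$ denotes the support class $\Ch_{sup}(\sigma,\kappa)=c_F(\Ch_{rel}(\sigma,\kappa))$, where $\Ch_{rel}(\sigma,\kappa)=(\Ch(\mathbb{A}),\beta(\mathbb{A},\sigma,\kappa))$. Since $c_F$ is well-defined on cohomology classes, it is enough to prove $\Ch_{rel}(\sigma,\theta)=\Ch_{rel}(\sigma,\lambda)$ in $\mathcal{H}^{-\infty}(\mathfrak{g},T^*M,T^*M\setminus F)$. The two relative cocycles share the first component $\Ch(\mathbb{A})$, which does not involve $\kappa$, and the relative differential has the form $D_{rel}(\mu,\nu)=(D\mu,\mu|_{T^*M\setminus F}-D\nu)$; taking $\mu=0$, it therefore suffices to produce a form $\nu\in\mathcal{A}^{-\infty}(\mathfrak{g},T^*M\setminus F)$ with $\beta(\mathbb{A},\sigma,\theta)-\beta(\mathbb{A},\sigma,\lambda)=D\nu$ on $T^*M\setminus F$; that is, the difference of the two transgression forms must be $D$-exact off $F$.

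To build $\nu$ I would interpolate linearly: set $\lambda_s=(1-s)\theta+s\lambda$ for $s\in[0,1]$. Moment maps are linear in the $1$-form and $\theta,\lambda$ agree on $\supp(\sigma)$, so each $\lambda_s$ agrees with $\theta$ on $\supp(\sigma)$; by the argument of the first paragraph, $\supp(\sigma)\cap C_{\lambda_s}=F$ for every $s$, so $\beta(\mathbb{A},\sigma,\lambda_s)$ is a well-defined generalized equivariant form on $T^*M\setminus F$ for all $s\in[0,1]$. Applying the double-transgression formula of superconnection calculus (as in \cite{BGV,MQ}, in the generalized-coefficient refinement of \cite{PV1,PV2}) to the two-parameter family $\mathbb{A}(\sigma,\lambda_s,t)$, $(s,t)\in[0,1]\times[0,\infty)$, one gets $\partial_s\,\eta(\mathbb{A},\sigma,\lambda_s,t)=\partial_t\,\gamma_s(t)+D\,\delta_s(t)$ for suitable transgression forms $\gamma_s(t),\delta_s(t)$ on $T^*M\setminus F$. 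Integrating over $t\in[0,\infty)$, the boundary term at $t=0$ drops out because $\mathbb{A}(\sigma,\lambda_s,0)=\mathbb{A}$ is independent of $s$, and the boundary term at $t=\infty$ vanishes by the same decay estimates that make $\beta$ converge off $F$; hence $\partial_s\beta(\mathbb{A},\sigma,\lambda_s)=D\big(\int_0^\infty\delta_s(t)\,dt\big)$. Integrating over $s\in[0,1]$ then produces $\nu=\int_0^1\!\int_0^\infty\delta_s(t)\,dt\,ds\in\mathcal{A}^{-\infty}(\mathfrak{g},T^*M\setminus F)$ with $\beta(\mathbb{A},\sigma,\theta)-\beta(\mathbb{A},\sigma,\lambda)=D\nu$, and feeding this back through $c_F$ gives $\Ch(\sigma,\theta)=\Ch(\sigma,\lambda)$ in $\mathcal{H}^{-\infty}_{T^*_G M\cap E^0}(\mathfrak{g},T^*M)$.

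The main obstacle is the analysis behind the last step: one must know that $\beta(\mathbb{A},\sigma,\lambda_s)$ and the transgression integrals converge, and that the $t\to\infty$ boundary contribution really vanishes, \emph{uniformly} in $s$, in the category of equivariant forms with generalized coefficients. This is precisely the content of the estimates in \cite{PV2}; what must be checked is that they hold uniformly along the segment $\{\lambda_s\}_{s\in[0,1]}$, which is where the observation that $\supp(\sigma)\cap C_{\lambda_s}=F$ is independent of $s$, together with the compactness of the family $\{\lambda_s\}$, is used. Alternatively, one could route the comparison through Lemma \ref{l3}(2), reducing it to an identity between $P_\theta$ and $P_\lambda$ after wedging with $\Ch(\sigma)$ and localizing to $F$, but the homotopy argument above seems the cleaner path.
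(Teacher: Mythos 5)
The paper does not prove Lemma \ref{l6}: both Lemma \ref{l3} and Lemma \ref{l6} are introduced explicitly as ``two lemmas from \cite{PV2}'' and cited without argument, so there is no in-paper proof to compare against, and your attempt should be judged as a reconstruction of the Paradan--Vergne argument.

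The structure of your argument is the right one, and the set-theoretic part is essentially correct, with one caveat worth making explicit. When you pass from $\theta|_{\supp(\sigma)}=\lambda|_{\supp(\sigma)}$ to $f_\theta|_{\supp(\sigma)}=f_\lambda|_{\supp(\sigma)}$, you need the equality of the \emph{pulled-back} $1$-forms on $\supp(\sigma)$ to control the moment map, and the moment map evaluates the forms on the generating vector fields $X_{T^*M}$; this only works because $\supp(\sigma)$ is $G$-invariant, so $X_{T^*M}$ is tangent to $\supp(\sigma)$ and only the pullback matters. In the paper's own application, $\theta$ and $\lambda=i^*\theta$ (extended by zero off $E^0$) do \emph{not} agree as full covectors along $E^0$, only as pullbacks, so this invariance observation is not cosmetic.

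The genuine gap is in the transgression step. The two-parameter homotopy $\lambda_s=(1-s)\theta+s\lambda$ with the observation that $\supp(\sigma)\cap C_{\lambda_s}=F$ is constant in $s$ is exactly the right device, and the formal Stokes/double-transgression identity is standard. But the analytic content --- that $\beta(\mathbb{A},\sigma,\lambda_s)$ converges in $\mathcal{A}^{-\infty}(\mathfrak{g},T^*M\setminus F)$ for every $s$, that the $t\to\infty$ boundary term vanishes, and that $\partial_s$, $\int_0^\infty dt$, $\int_0^1 ds$ and $D$ may all be interchanged in the generalized-coefficient category, uniformly over $s\in[0,1]$ --- is precisely where you stop and cite ``the estimates in \cite{PV2}.'' Since Lemma \ref{l6} is itself one of the \cite{PV2} statements you would be proving, that appeal is circular as a self-contained argument. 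What actually needs to be verified is that the growth hypotheses of Theorem \ref{tpv2} (fibrewise at-most-linear $1$-forms with $D\lambda_s(X)$ growing along the fibres with constants polynomial in $X$, uniformly in $s$) hold for the segment $\{\lambda_s\}$; this is true here because $\theta$ and $\lambda$ are both fibrewise linear, but you assert the uniformity rather than checking it, and that verification is the substance of the lemma. Your closing suggestion --- routing the comparison through Lemma \ref{l3}(2), i.e. comparing $P_\theta\wedge\Ch_{sup}(\sigma)$ with $P_\lambda\wedge\Ch_{sup}(\sigma)$ localized to $F$ --- would shift the uniformity question onto the simpler scalar forms $P_\kappa$ and is likely the cleaner route to make rigorous.
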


Together, the two above lemmas give:
\begin{prop}
Let $i:E^0 \hookrightarrow 
T^\ast M$ be the inclusion of $E^0$, 
and define $\lambda = i^\ast\theta$.  Using the splitting $T^\ast 
M = E^\ast\oplus E^0$, consider $\lambda$ as a form on all of 
$T^\ast M$, by taking $\lambda\vert_{E^\ast} = 0$, and $\lambda\vert_{E^0} 
= i^\ast\theta$.

Then, $\lambda$ and $\theta$ agree on $\supp(\sigma_b) = E^0$, and we have 
\begin{equation}
\Ch_{BV}(\sigma_b,X) = P_\lambda(X)\wedge \Ch_{MQ}(\sigma_b,X)
\text{ in } 
\mathcal{H}^{-\infty}_{rdm}(\mathfrak{g},T^*M).\label{eq11}
\end{equation}
\end{prop}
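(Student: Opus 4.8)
The first assertion is immediate from the definition of $\lambda$: by construction $\lambda$ vanishes on the $E^\ast$-summand of the splitting $T^\ast M = E^\ast\oplus E^0$ and restricts to $i^\ast\theta$ on the $E^0$-summand, while $\supp(\sigma_b)=E^0$; hence $\lambda$ and $\theta$ have the same pullback to $\supp(\sigma_b)$. Moreover, by ellipticity, $\supp(\sigma_b)\cap C_\theta = \supp(\sigma_b)\cap C_\lambda = T^\ast_G M\cap E^0 = \{0\}$ is compact. The plan for the identity (\ref{eq11}) is to prove the corresponding equality first in the equivariant cohomology $\mathcal{H}^{-\infty}_{\{0\}}(\mathfrak{g},T^\ast M)$ supported at the origin, where Lemmas \ref{l3} and \ref{l6} apply verbatim, and then to transport it to $\mathcal{H}^{-\infty}_{rdm}(\mathfrak{g},T^\ast M)$ by means of Theorems \ref{tpv1} and \ref{tpv2}.

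First I would apply Lemma \ref{l6} with $\sigma = \sigma_b$ and the $1$-form $\lambda$ just described, whose hypothesis $\theta|_{\supp(\sigma_b)} = \lambda|_{\supp(\sigma_b)}$ has just been verified; this gives $\Ch_{sup}(\sigma_b,\theta) = \Ch_{sup}(\sigma_b,\lambda)$ in $\mathcal{H}^{-\infty}_{\{0\}}(\mathfrak{g},T^\ast M)$. Next I would apply part (2) of Lemma \ref{l3} with $\kappa = \lambda$ to rewrite the right-hand side as $\Ch_{sup}(\sigma_b,\lambda) = P_\lambda\wedge\Ch_{sup}(\sigma_b)$, again in $\mathcal{H}^{-\infty}_{\supp(\sigma_b)\cap C_\lambda}(\mathfrak{g},T^\ast M) = \mathcal{H}^{-\infty}_{\{0\}}(\mathfrak{g},T^\ast M)$. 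Combining the two identities yields
\begin{equation*}
\Ch_{sup}(\sigma_b,\theta) = P_\lambda\wedge\Ch_{sup}(\sigma_b)\quad\text{in } \mathcal{H}^{-\infty}_{\{0\}}(\mathfrak{g},T^\ast M).
\end{equation*}

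Finally, I would push this identity forward along the natural maps $\mathcal{H}^{-\infty}_{\{0\}}(\mathfrak{g},T^\ast M)\to\mathcal{H}^{-\infty}_c(\mathfrak{g},T^\ast M)\hookrightarrow\mathcal{H}^{-\infty}_{rdm}(\mathfrak{g},T^\ast M)$ of (\ref{cpct}) and (\ref{incl}). Under these maps, Theorem \ref{tpv2} identifies the image of $\Ch_{sup}(\sigma_b,\theta)$ with the class of $\Ch_{BV}(\sigma_b,X)$, Theorem \ref{tpv1} identifies the image of $\Ch_{sup}(\sigma_b)$ with the class of $\Ch_{MQ}(\sigma_b,X)$, and $P_\lambda$ is represented by a form $P_U(\lambda,\chi) = 2\pi\chi + d\chi\,\beta(\lambda)$ for a suitable invariant cutoff $\chi$ supported near $C_\lambda$. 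Since the wedge product descends to all of these cohomology spaces and commutes with the maps just used, the class $P_\lambda\wedge\Ch_{sup}(\sigma_b)$ is represented in $\mathcal{H}^{-\infty}_{rdm}(\mathfrak{g},T^\ast M)$ by $P_U(\lambda,\chi)\wedge\Ch_{MQ}(\sigma_b,X)$, whose class is that of $P_\lambda(X)\wedge\Ch_{MQ}(\sigma_b,X)$; this gives (\ref{eq11}).

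The two steps invoking Lemmas \ref{l3} and \ref{l6} are essentially formal once their hypotheses are in place, and I expect the main obstacle to lie in the last paragraph: one must know that Theorems \ref{tpv1} and \ref{tpv2} apply, i.e.\ that $\sigma_b$ together with the deforming $1$-forms $\theta$ and $\lambda = i^\ast\theta$ satisfy the growth conditions along the fibres of $T^\ast M\to M$ demanded in \cite{PV2}. This uses that $\sigma_b^2 = \|q(\xi)\|^2$ forces $\Ch_{MQ}(\sigma_b,X)$ to have Gaussian decay along the fibres of $E^\ast$ (as in (\ref{tdth})), and that $\theta$ and $\lambda$ are fibrewise linear, so the transgression integrals defining $\beta(\mathbb{A},\sigma_b,\theta)$ and $\beta(\lambda)$ converge as generalized forms. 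One must also check that moving between the relative, supported, and rapidly-decreasing-in-mean versions of equivariant cohomology is compatible with the wedge-product module structure and with the chosen representatives, so that the equality established over the compact set $\{0\}$ does descend to $\mathcal{H}^{-\infty}_{rdm}(\mathfrak{g},T^\ast M)$ as stated.
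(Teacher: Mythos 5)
Your proof is correct and follows essentially the same route as the paper, which simply states that ``Together, the two above lemmas give'' the proposition: apply Lemma \ref{l6} to replace $\theta$ by $\lambda$, apply Lemma \ref{l3}(2) to factor $\Ch_{sup}(\sigma_b,\lambda)$ as $P_\lambda\wedge\Ch_{sup}(\sigma_b)$, and pass to $\mathcal{H}^{-\infty}_{rdm}$ via Theorems \ref{tpv1} and \ref{tpv2} (already set up in the paragraph just before the two lemmas, where $\Ch_{BV}(\sigma_b,X)$ and $\Ch_{MQ}(\sigma_b,X)$ are identified as representatives of $\Ch_{sup}(\sigma_b,\theta)$ and $\Ch_{sup}(\sigma_b)$ respectively). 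Your extra remarks on the growth-condition hypotheses of the Paradan--Vergne theorems go slightly beyond what the paper spells out, but they are accurate and address exactly the implicit technical point.
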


\section{Calculation of the index}
We now apply the results of the previous section, in the case of a 
compact, co-oriented contact manifold $(M,E)$ to the Berline-Vergne index 
formula (\ref{bv}).

Recall that the equivariant $\hat{A}$-class is defined for any real 
$G$-equivariant vector bundle $\E\rightarrow M$, with 
$G$-equivariant connection $\nabla$ and corresponding equivariant 
curvature $F(X)$ by
\begin{equation*}
\hat{A}(\E,X) = 
\det\nolimits^{1/2}_\R\left(\frac{F(X)}{e^{F(X)/2}-e^{-F(X)/2}}\right),
\end{equation*}
with the choice of square root depending on orientation.  The equivariant
$\hat{A}$-class of $TM\rightarrow M$ is denoted by $\hat{A}(M,X)$.

The form $D_\R(N(g),X)$ associated to the 
normal bundle is defined in \cite{BV1} as follows:

\begin{definition}
For $g\in G$, let $F_N(X)$, $X\in\mathfrak{g}(g)$, denote the 
equivariant curvature of $N(g)$ with respect to a $G(g)$-equivariant 
connection.  Then $D_\R(N(g),X)$ is the $G(g)$-equivariantly closed from 
on $M(g)$ given for $X\in \mathfrak{g}(g)$ by
\begin{equation*}
D_\R(N(g),X) = \det\nolimits_\R(1-g^N e^{F_N(X)}),
\end{equation*}
where $g^N$ denotes the lifted action of $g\in G$ 
on $N(g)$.
\end{definition}

The splitting $TM = E\oplus \R$ given by the choice of contact form, together with the invaraince of the Reeb field imply that the normal bundle $N(g)$ is contained within $E|_{M(g)}$, and thus inherits a complex structure from $E$.

Hence we may similarly define the form $D_\C(N(g),X)$ using the complex determinant in place of the real determinant used above.
Note that using the complex structure on $N(g)$, we may write $N(g)\otimes\C 
= N(g)\oplus \overline{N(g)}$ and obtain:
\begin{equation*}
D_\R(N(g),X) = D_\C(N(g)\otimes\C,X) = D_\C(N(g),X)D_\C(\overline{N(g)},X).
\end{equation*}

We are now ready to state the main theorem of this article:
\begin{theorem}
Let $(M,E)$ be a compact, co-oriented contact manifold of dimension 
$2n+1$, 
and let $G$ be a 
compact Lie group acting elliptically on $M$.  Let $g\in G$, and let $k(g)$ be the locally constant function defined by
$\dim M(g) = 2k(g)+1$.\label{main}

The $G$-equivariant index of $\dirac$ is the generalized 
function on $G$ whose germ at $g\in G$, is given,
for $X\in\mathfrak{g}(g)$ sufficiently small, by
\begin{equation}
\ind^G(\dirac)(ge^X) = \int\limits_{M(g)}(2\pi i)^{-k(g)} \label{fpf}
\frac{\Td(E(g),X)\mathcal{J}(E(g),X)}{D_\C(N(g),X)}.
\end{equation}
\end{theorem}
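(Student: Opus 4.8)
The plan is to begin from the Berline--Vergne character formula (\ref{bv}) applied to $\sigma=\sigma_b$ (using the equality $\ind^G(\dirac)=\ind^G(\sigma_b)$ of \cite{BV2}), which presents the germ of $\ind^G(\dirac)$ at $g$ as an integral over $T^\ast M(g)$, and to transform its integrand into the one in (\ref{fpf}) by three moves: factoring off the contribution of the normal bundle $N(g)$, inserting the Paradan--Vergne description of $\Ch_{BV}(\sigma_b,X)$, and integrating over the fibres of $\pi:T^\ast M(g)\to M(g)$ in two stages --- first along the Reeb line $\R\alpha^g$, then along $E(g)^\ast$.

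First I set up the geometry over $M(g)$ from Proposition \ref{prop1}: $(M(g),E(g))$ is a contact manifold with contact form $\alpha^g=i^\ast\alpha$, $G(g)$ acts elliptically on it, and since the Reeb field $Y$ is invariant we have $\R Y\subset TM(g)$, hence $N(g)\subset E|_{M(g)}$, so $N(g)$ inherits the complex structure $J$ and $E|_{M(g)}=E(g)\oplus N(g)$. Consequently $\E|_{M(g)}=\bigwedge E(g)^{0,1}\otimes\bigwedge N(g)^{0,1}$, and because $q(\xi)\in E(g)^\ast$ whenever $\xi\in T^\ast M(g)$, the restricted symbol factors as $\sigma_b|_{T^\ast M(g)}=\sigma_b^{E(g)}\otimes\Id_{\bigwedge N(g)^{0,1}}$, with $\sigma_b^{E(g)}$ the symbol of the operator $\dirac$ associated to $(M(g),E(g))$. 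Multiplicativity of the equivariant Chern character, the identity $\Str\nolimits_{\bigwedge W}(A)=\det\nolimits_\C(1-A)$, and $N(g)^{0,1}\cong\overline{N(g)}$ then give
\begin{equation*}
\Ch^g_{BV}(\sigma_b,X)=\Ch_{BV}(\sigma_b^{E(g)},X)\wedge\det\nolimits_\C(1-g^N e^{F_{\overline{N(g)}}(X)})=\Ch_{BV}(\sigma_b^{E(g)},X)\wedge D_\C(\overline{N(g)},X).
\end{equation*}
Combining this with $\hat{A}^2(M(g),X)=\hat{A}^2(E(g),X)$ (since $\R Y^g$ is a trivial flat summand of $TM(g)$) and $D_\R(N(g),X)=D_\C(N(g),X)D_\C(\overline{N(g)},X)$, the Berline--Vergne integrand collapses to $(2\pi i)^{-(2k(g)+1)}D_\C(N(g),X)^{-1}\,\Ch_{BV}(\sigma_b^{E(g)},X)\,\hat{A}^2(E(g),X)$.

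Next I feed in the constructions of the preceding section on $(M(g),E(g))$. Let $\lambda^g$ be the $1$-form on $T^\ast M(g)$ equal to the restriction of the canonical $1$-form to the annihilator line $E(g)^0$ and vanishing on $E(g)^\ast$; along the $\R\alpha^g$-fibre $\lambda^g=s\,\pi^\ast\alpha^g$. By the superconnection formula (\ref{F}), $\lambda^g$ enters $\mathbb{F}(\mathbb{A},\sigma_b,\lambda^g,1)$ only through the scalar even form $iD\lambda^g(X)$, so $\Ch_{BV}(\sigma_b^{E(g)},X)=e^{iD\lambda^g(X)}\wedge\Ch_{MQ}(\sigma_b^{E(g)},X)$; that this form may be used in the integral in place of the compactly supported representative $P_{\lambda^g}\wedge\Ch_{MQ}(\sigma_b^{E(g)},X)$ of (\ref{eq11}) is the content of Theorem \ref{tpv2}. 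Applying (\ref{tdth}) to $M(g)$ gives $\Ch_{MQ}(\sigma_b^{E(g)},X)=(2\pi i)^{k(g)}\pi^\ast(\Td(E(g)^{0,1},X)^{-1})\,q^\ast\Th_{MQ}(E(g)^{0,1})$, with $\pi=p\circ q:T^\ast M(g)\xrightarrow{q}E(g)^\ast\xrightarrow{p}M(g)$. I then integrate over the fibres of $\pi=p\circ q$. Only $e^{iD\lambda^g(X)}$ depends on the coordinate $s$ along the $q$-fibre, and
\begin{equation*}
e^{iD\lambda^g(X)}=e^{is\,\pi^\ast D\alpha^g(X)}+i\,ds\wedge\pi^\ast\alpha^g\wedge e^{is\,\pi^\ast D\alpha^g(X)},
\end{equation*}
so integrating over $s\in\R$ and using $\int_\R e^{isy}\,ds=2\pi\delta_0(y)$ (extended degreewise to the form-valued case), valid because $f_{\alpha^g}$ is nowhere zero by ellipticity of $G(g)$, so that the wavefront condition (\ref{wf}) holds, yields $q_\ast e^{iD\lambda^g(X)}=2\pi i\,p^\ast(\alpha^g\wedge\delta_0(D\alpha^g(X)))=2\pi i\,p^\ast\mathcal{J}(E(g),X)$. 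The remaining integration over the $p$-fibre contributes $\int_{E(g)^\ast/M(g)}\Th_{MQ}(E(g)^{0,1})=1$. Since $(2\pi i)^{-(2k(g)+1)}(2\pi i)^{k(g)}(2\pi i)=(2\pi i)^{-k(g)}$ and $\hat{A}^2(E(g),X)=\Td(E(g),X)\,\Td(E(g)^{0,1},X)$ (from $\Td(V)=e^{c_1(V)/2}\hat{A}(V_\R)$ together with $E(g)^{0,1}\cong\overline{E(g)}$), the integrand over $M(g)$ becomes $(2\pi i)^{-k(g)}\Td(E(g),X)\mathcal{J}(E(g),X)/D_\C(N(g),X)$, which is (\ref{fpf}).

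The step I expect to be the main obstacle is the fibre integration along the Reeb direction: turning the formal computation $q_\ast e^{iD\lambda^g(X)}=2\pi i\,p^\ast\mathcal{J}(E(g),X)$ into a rigorous identity in $\mathcal{H}^{-\infty}_{rdm}(\mathfrak{g},E(g)^\ast)$. This needs the generalized-coefficient and wavefront-set bookkeeping organized around (\ref{wf}) and the distributional identities (\ref{d2})--(\ref{d3}), with ellipticity of $G(g)$ on $M(g)$ (Proposition \ref{prop1}) ensuring $f_{\alpha^g}$ never vanishes, together with the compatibility of fibre integration with the equalities in $\mathcal{H}^{-\infty}_{rdm}$ supplied by Theorem \ref{tpv2}. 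The remaining ingredients --- the normal-bundle factorization and the $\hat{A}$--$\Td$ identity --- are routine once the structure of $M(g)$ is available.
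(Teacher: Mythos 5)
Your proposal is correct and follows essentially the same route as the paper's proof: set up the contact structure on $M(g)$ via Proposition \ref{prop1}, factor the Berline--Vergne Chern character at $g$ into the $E(g)$-part times $D_\C(\overline{N(g)},X)$, replace it by the Paradan--Vergne representative localized near $E(g)^0\cap T^*_{G(g)}M(g)$, and push forward in two stages (Reeb line, then $E(g)^\ast$) so that the Thom form integrates to $1$ and the Reeb-line integral produces $2\pi i\,\mathcal{J}(E(g),X)$. The only deviation is cosmetic: you integrate $e^{iD\lambda^g(X)}$ directly as an oscillatory integral, whereas the paper integrates the compactly supported representative $P_{\lambda^g}$ using the $\delta_\pm$ identities (Lemma \ref{l101}); both are justified by Theorem \ref{tpv2} and yield the same answer.
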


\subsection{The formula near the identity}
We first consider the index formula for group elements $e^X$, for 
$X\in\mathfrak{g}$ sufficiently small.  The calculation in this case is 
simpler, and employs the results of \cite{PV2} directly.  The general 
result will then follow an analogous approach.

\begin{theorem}\label{t1}
For $X\in\mathfrak{g}$ sufficiently small, we have
\begin{equation}
\ind^G(\dirac)(e^X) = \frac{1}{(2\pi i)^n}\int\limits_M 
\Td(E,X)\mathcal{J}(M,X).
\end{equation}
\end{theorem}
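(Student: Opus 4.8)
The plan is to specialize the Berline--Vergne character formula (\ref{bv}) to $g=e$ and then replace each non-compactly supported term in its integrand by the compactly supported, generalized-coefficient representative supplied by \cite{PV2}, after which the integral over $T^*M$ can be computed directly by iterated fibre integration.

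First I would note that at the identity $M(e)=M$, the normal bundle $N(e)$ has rank zero so $D_\R(N(e),X)=1$, and $\dim M=2n+1$; together with the identity of generalized functions $\ind^G(\dirac)=\ind^G(\sigma_b)$ of \cite{BV2}, formula (\ref{bv}) becomes
\[
\ind^G(\dirac)(e^X)=(2\pi i)^{-(2n+1)}\int_{T^*M}\Ch_{BV}(\sigma_b,X)\,\hat A^2(M,X),
\]
valid for $X\in\mathfrak g$ sufficiently small. Using the Proposition containing (\ref{eq11}) I would replace $\Ch_{BV}(\sigma_b,X)$ by $P_\lambda(X)\wedge\Ch_{MQ}(\sigma_b,X)$, an identity in $\mathcal H^{-\infty}_{rdm}(\mathfrak g,T^*M)$; since $\hat A^2(M,X)$ is the pullback of a closed form on $M$ and its product with an rdm form is again rdm, the integral over $T^*M$ --- read as $\int_M\circ\,\pi_{M*}$ with $\pi_{M*}$ the fibre-integration map of Theorem \ref{tpv2} --- is unaffected. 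Finally I would substitute the Thom-form expression (\ref{tdth}) for $\Ch_{MQ}(\sigma_b,X)$.

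The resulting integrand over $T^*M$ is $P_\lambda(X)$ wedged with $q^*\Th_{MQ}(E^{0,1})$ and with forms pulled back from $M$ via $\pi_M=p\circ q$. Using the splitting $T^*M=E^*\oplus E^0$ I would integrate in the two stages $T^*M\xrightarrow{q}E^*\xrightarrow{p}M$. Along the one-dimensional fibres of $q$ (the lines $E^0\cong\R\alpha$, on which $\supp P_\lambda$ is compact since $C_\lambda=E^*$ meets each fibre in one point) ordinary fibre integration applies, and the key computation is that, writing $P_\lambda=2\pi\chi+d\chi\,\beta(\lambda)$ with $\beta(\lambda)=-2\pi i\,\lambda\,\delta_+(D\lambda)$ and using the distributional identities (\ref{d1})--(\ref{d3}), one obtains $q_*P_\lambda(X)=2\pi i\,p^*\mathcal J(E,X)$. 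By the projection formula the remaining $q$-pullbacks pass through $q_*$, so after this stage the integrand on $E^*$ is $2\pi i\,p^*\mathcal J(E,X)\wedge\Th_{MQ}(E^{0,1})$ times $p^*$ of $(2\pi i)^n\Td(E^{0,1},X)^{-1}\hat A^2(M,X)$; integrating over the $2n$-dimensional fibres of $p$ and using that the Thom form integrates to $1$ collapses this to an integral over $M$. Collecting the factors $(2\pi i)^{-(2n+1)}$, $(2\pi i)^n$ and $2\pi i$ gives the prefactor $(2\pi i)^{-n}$, and it remains to identify $\Td(E^{0,1},X)^{-1}\hat A^2(M,X)$ with $\Td(E,X)$; this is a standard manipulation of characteristic forms, using $TM\cong E\oplus\R Y$ (so $\hat A(M,X)=\hat A(E,X)$) together with the Hermitian isomorphism $E^{0,1}\cong E_{1,0}$ and the compatibility of $J$ with $d\alpha$.

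The step I expect to demand the most care is the fibre integration: justifying that $q_*$ and $p_*$ are well-defined on the relevant spaces of rapidly-decreasing-in-mean equivariant forms with generalized coefficients (so that substituting the Paradan--Vergne representative of $\Ch_{BV}$ and splitting $\pi_{M*}=p_*\circ q_*$ are both legitimate), and carrying out the explicit distributional computation $q_*P_\lambda(X)=2\pi i\,p^*\mathcal J(E,X)$ --- in particular tracking the behaviour of $\delta_\pm$ under rescaling by the fibre coordinate on $E^0$, which is precisely where $\delta_+$ and $\delta_-$ recombine into $\delta_0$ to produce $\mathcal J(E,X)$.
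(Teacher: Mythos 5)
Your proposal is correct and follows essentially the same route as the paper: specialize Berline--Vergne at the identity, substitute $\Ch_{BV}=P_\lambda\wedge\Ch_{MQ}$ and the Thom-form factorization of $\Ch_{MQ}$, split $\pi_{M*}$ as $p_*\circ q_*$, compute $q_*P_\lambda=2\pi i\,p^*\mathcal J(E,X)$ (the paper's Lemma \ref{l101}), integrate the Thom form to $1$, and simplify $\hat A^2(M,X)\Td(E^{0,1},X)^{-1}=\Td(E^{1,0},X)$ via $TM=E\oplus\R Y$. The step you flag as delicate --- the distributional fibre integral producing $\delta_0$ from $\delta_\pm$ under the rescaling $\lambda=-t\alpha$ --- is indeed exactly the content of Lemma \ref{l101}.
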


\begin{proof}
The formula of Berline-Vergne for the equivariant index of a 
transversally elliptic operator is given by
\begin{equation}
\ind^G(\dirac)(e^X) = \frac{1}{(2\pi i)^{(2n+1)}}\int\limits_{T^\ast 
M}\pi^*_M(\hat{A}^2(M,X))\Ch_{BV}(\sigma_b,X).\label{BV}
\end{equation}

Using the splitting $TM = E\oplus\R$, and the almost-complex structure on 
$E$, we have that 
\begin{equation}\label{bert}
\hat{A}^2(M,X) = 
\hat{A}^2(E,X) = \Td(E\otimes\C,X) = \Td(E^{1,0},X)\Td(E^{0,1}, X).
\end{equation}

By (\ref{tdth}) and (\ref{eq11}), we have 
\begin{align}\label{ernie}
\Ch_{BV}(\sigma_b,X) &= P_\lambda(X)\Ch_{MQ}(\sigma_b,X)\\\nonumber
& = (2\pi i)^n 
P_\lambda(X) 
\pi^*_M(\Td(E^{0,1},X)^{-1})q^*(\Th_{MQ}(E^{0,1})).
\end{align}

Combining (\ref{bert}) and (\ref{ernie}), we find

\begin{multline}\label{garrr}
\pi^*_M(\hat{A}^2(M,X))\Ch_{BV}(\sigma_b,X) =\\ (2\pi i)^n 
\pi^*_M(\Td(E^{1,0},X)^{-1})q^*(\Th_{MQ}(E^{0,1}))P_\lambda(X)
\end{multline}
in $\mathcal{H}^{-\infty}_{rdm}(\mathfrak{g},T^*M)$.

\begin{lemma}\label{l101}
In terms of the projections $q:T^*M\rightarrow E^*$ and $p:E^*\rightarrow 
M$ we have
\begin{equation*}
q_*P_\lambda(X) = 2\pi i\, p^*\mathcal{J}(E,X).
\end{equation*}
That is, $(\pi_M)_*P_\lambda = p_*q_*P_\lambda = 2\pi 
i\mathcal{J}(E,X)$ in $\mathcal{H}^{-\infty}(\mathfrak{g},M)$.
\end{lemma}

\begin{proof}
A representative of $P_\lambda$ is given by 
\begin{equation*}
P^\chi_\lambda = 2\pi\chi + d\chi\wedge \beta(\lambda) = 2\pi\chi -2\pi i 
d\chi\wedge\lambda\delta_+(D\lambda),
\end{equation*}
where $\chi$ is a cutoff function with support in a 
neighbourhood of $E^*$, and $\chi\equiv 1$, on $E^*$.

Since $\lambda$ is a form on $E^0$, and $\chi$ is constant on 
$E^*$, $P_\lambda$ is independent of $E^*$, 
and so it remains to calculate the integral over the fibre of $E^0 = 
M\times\R$.  Let $t$ be the co-ordinate along the fibre, and write 
$\chi=\chi(t)$.  Then $\chi(t)$ 
is supported on a neighbourhood of $t=0$, with $\chi(0)=1$,
and $\lambda$ may be written as 
$\lambda = -t\alpha$, for $\alpha$ a contact form on $M$. Thus $D\lambda = 
D(-t\alpha) = \alpha\wedge dt - tD\alpha$, and $P_\lambda$ becomes
\begin{align*}
P_\lambda & = 2\pi\chi(t) -2\pi i\chi^\prime(t)\,dt\wedge(-t\alpha) 
\delta_+(\alpha\wedge dt - tD\alpha)\\
&=2\pi\chi(t) -2\pi i\alpha\wedge t\chi^\prime(t)\,dt \delta_+(-tD\alpha).
\end{align*}

Thus, the integral over $\R$ becomes, with the help of the identities in 
Section 5.1, 
\begin{align*}
\int^\infty_{-\infty}P_\lambda &=-2\pi i\alpha\int^\infty_{-\infty}\chi^\prime(t)t\delta_+(-tD\alpha)\,d t\\
&= -2\pi i\alpha\left[ \int^\infty_0 \chi^\prime(t)\delta_{-}(D\alpha)\,d t - 
\int^0_{-\infty} \chi^\prime(t)\delta_{+}(D\alpha)\,d t\right]\\
&= -2\pi i\alpha\left[ -\delta_{-}(D\alpha) - \delta_{+}(D\alpha)\right]\\
&=2\pi i\alpha\delta_0(D\alpha),
\end{align*}
and we obtain our result.
\end{proof}

Let $\Td(E,X)$ denote the cohomology class of the Todd form 
$\Td(E^{1,0},X)$, and write $\Th_{MQ}(E^{0,1}) = \Th_{MQ}(E^*)$ using 
the isomorphism (\ref{psi}). 
Then using Lemma \ref{l101} and (\ref{garrr}) in the index formula 
(\ref{BV}), we obtain
\begin{align*}
\ind^G(\dirac)(e^X) &= (2\pi i)^{-(2n+1)}\int\limits_{T^*M} (2\pi i)^n 
\pi^*_M(\Td(E,X))q^*\Th_{MQ}(E^*)P_\lambda(X)\\
&= \frac{1}{(2\pi 
i)^n}\int\limits_{E^*}p^*(\Td(E,X)\mathcal{J}(E,X))\Th_{MQ}(E^*)\\
&= \frac{1}{(2\pi i)^n}\int\limits_M \Td(E,X)\mathcal{J}(E,X).\qedhere
\end{align*}
\end{proof}

\subsection{Fixed Point Formula}
The general calculation of the push-forward of (\ref{bv}) from $T^*M(g)$ to 
$M(g)$ is analogous to the proof given above, since by Proposition 
\ref{prop1}, we know that $(M(g),E(g))$ is again a contact manifold on 
which $G(g)$ acts elliptically.  The primary added difficulty comes from 
the appearance of the action of $g\in G$ in the Chern character form of 
\cite{BV1}.

\vspace{8pt}

\begin{proof}[Proof of Theorem \ref{main}:] 
By Proposition 
\ref{prop1}, $(M(g),E(g))$ is again a contact manifold, and we have the 
splitting
\begin{equation*}
T^*M(g) = E^*(g)\oplus \R\alpha^g.
\end{equation*}

Let $\dim M(g) = 2k(g)+1$, so that $E(g)$ is a vector bundle over $M(g)$ 
of complex rank $k(g)$.

Denote by $j:T^*M(g)\rightarrow T^*M$ the inclusion of the $g$-fixed point 
set in $T^*M$.  Let $H=G(g)$, with Lie algebra $\mathfrak{h}$.  Let 
$\mathbb{A}^g = j^*\mathbb{A}$, $\theta^g = j^*\theta$ and $\sigma_b^g = 
j^*\sigma_b$ denote the restrictions of the superconnection, canonical 
1-form and symbol of the previous section to $T^*M(g)$.
Let $p,q$ denote the projections $p:E^*(g)\rightarrow 
M(g)$ and $q:T^*M(g)\rightarrow E^*(g)$.

The Chern character $\Ch_{BV}^g(\sigma_b)(X)$ of 
Berline and Vergne is given 
by
\begin{equation*}
\Ch_{BV}^g(\sigma_b)(X) = \Str(g^\E\cdot 
j^*(e^{\mathbb{F}(\mathbb{A},\sigma_b,\theta)(X)})).
\end{equation*}
By \cite{BV1}, since $\sigma_b$ is $G$-transversally elliptic, 
$\sigma_b^g$ is 
$H$-transversally elliptic, and $\Ch_{BV}^g(\sigma_b)$ defines a class in 
$\mathcal{H}^\infty_{rdm}(\mathfrak{h},T^*M(g))$.

Let $j^*\nabla = \nabla_E\oplus\nabla_N$ denote the decomposition of the 
restriction of the invariant Hermitian connection $\nabla$ on $E^{0,1}$ into 
connections on $E^{0,1}(g)$ and $N(g)$, respectively.  Since $\mathbb{A} = 
q^*\nabla$, we have $\mathbb{A}^g = \mathbb{A}_E\oplus\nabla_N$, where 
$\mathbb{A}_E = q^*\nabla_E$.  

By Lemma 19 of \cite{BV1}, the canonical 1-form on $T^*M(g)$ is 
simply the restriction $\theta^g$ of the canonical 1-form $\theta$ on 
$T^*M$.   Since $\theta^g$ is invariant under the action of $g$, we have 
\begin{equation*}
\Ch_{BV}^g(\sigma_b,X) = e^{iD\theta^g}\Str(g^\E\cdot 
e^{\mathbb{F}(\mathbb{A}_E\oplus\nabla_N,\sigma_b)(X)}).
\end{equation*}
\begin{lemma}
Let $V$ be a complex vector space of dimension $k$, and let a Lie group 
$G$ act on $V$, such that the action commutes with the natural $U(k)$ 
action on $V$.  Let $\rho: U(k)\rightarrow \bigwedge V^*$ denote the 
representation of $U(k)$ on $\bigwedge V^*$ as in \cite{MQ}, and let 
$w\in\mathfrak{u}(k)$ be a skew-symmetric Hermitian matrix.  Then for any 
$g\in G$, we have
\begin{equation*}
\Str(g\cdot e^{\rho(w)}) = \det\nolimits_\C(1-g\cdot e^{-w}).
\end{equation*}
\end{lemma}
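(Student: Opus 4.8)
The plan is to reduce the identity to the standard fact that, on an exterior algebra, the supertrace of the operator functorially induced by an endomorphism $A$ of the underlying space equals $\det_{\C}(1-A)$. All of the substance is contained in that fact together with the recognition that $g\cdot e^{\rho(w)}$ is such an induced operator; matching the resulting determinant with the conventions of \cite{MQ} is the only delicate point.

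First I would unwind the definition of $\rho$. By construction $\rho$ is the representation of $\mathfrak{u}(k)$ on $\bigwedge V^{*}=\bigoplus_{j=0}^{k}\bigwedge^{j}V^{*}$ obtained by extending the natural action of $\mathfrak{u}(k)$ on $V^{*}$ as a derivation; equivalently, writing $w_{V^{*}}\in\End(V^{*})$ for the induced action of $w$ on $V^{*}$, the operator $e^{\rho(w)}$ is the \emph{functorial exponential}, acting on each $\bigwedge^{j}V^{*}$ by $\bigwedge^{j}\!\big(e^{w_{V^{*}}}\big)$ (one checks $\tfrac{d}{dt}(e^{tw_{V^{*}}}\xi_{1})\wedge\cdots\wedge(e^{tw_{V^{*}}}\xi_{j})$ is exactly the derivation action). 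Likewise $g$ acts on $\bigwedge^{j}V^{*}$ by $\bigwedge^{j}(g_{V^{*}})$, where $g_{V^{*}}\in\End(V^{*})$ is the induced action of $g$ on $V^{*}$; here the hypothesis that the $G$-action commutes with $U(k)$ is what makes these two actions compatible with the structure used to define $\rho$. Since $\bigwedge^{j}(\cdot)$ is a functor, $\bigwedge^{j}(a)\bigwedge^{j}(b)=\bigwedge^{j}(ab)$, so $g\cdot e^{\rho(w)}$ preserves the grading and acts on $\bigwedge^{j}V^{*}$ as $\bigwedge^{j}(A)$ with $A=g_{V^{*}}\circ e^{w_{V^{*}}}\in\End(V^{*})$.

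Next I would invoke the supertrace--determinant identity: for any finite-dimensional complex vector space $W$ and any $A\in\End(W)$,
\[
\sum_{j=0}^{\dim W}(-1)^{j}\,\Tr_{\bigwedge^{j}W}\!\big(\textstyle\bigwedge^{j}A\big)=\det\nolimits_{\C}(1-A).
\]
This follows by passing to the eigenvalues $\lambda_{1},\dots,\lambda_{\dim W}$ of $A$ over $\C$, the left-hand side being $\sum_{j}(-1)^{j}e_{j}(\lambda)=\prod_{i}(1-\lambda_{i})$ (both sides are polynomial in the entries of $A$, so the diagonalizable case suffices). Applying this with $W=V^{*}$ and $A$ as above yields $\Str(g\cdot e^{\rho(w)})=\det_{\C}(1-A)$.

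It remains to identify $\det_{\C}(1-A)$ with $\det_{\C}(1-g\,e^{-w})$, and this is the step that actually requires attention. Under the Hermitian structure on $V$, the induced action $g_{V^{*}}$ on $V^{*}$ is the transpose of the action of $g$ on $V$ in the convention fixed in \cite{MQ}, and $w_{V^{*}}$ corresponds under the same identification to $-w$; since $\det_{\C}(1-B)=\det_{\C}(1-B^{\mathrm{T}})$ and $\det_{\C}(1-BC)=\det_{\C}(1-CB)$, the determinant is unaffected by these moves and equals $\det_{\C}(1-g\,e^{-w})$. The mathematical content is entirely in the two displayed identities; I expect the main obstacle to be purely this bookkeeping — ensuring that the sign of $w$ and the absence of an inverse on $g$ emerge exactly as stated, which is dictated by the precise conventions of \cite{MQ} for $\rho$ and for the $U(k)$-action on $\bigwedge V^{*}$.
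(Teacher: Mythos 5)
Your argument is correct and is essentially the paper's proof made explicit: the paper merely notes that commutativity lets one simultaneously diagonalize $g$ and $w$ and then (implicitly) computes both sides in eigenvalues, which is precisely the content of your identity $\Str\bigl(\bigwedge A\bigr) = \det\nolimits_\C(1-A)$ applied to $A = g_{V^*}\circ e^{w_{V^*}}$. The convention-matching with \cite{MQ} that you flag (the sign on $w$ and the absence of an inverse on $g$) is a real, if minor, loose end, but the paper's one-line proof leaves it equally implicit.
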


\begin{proof}
Since the action of $G$ on $\bigwedge V^*$ commutes with the 
representation $\rho$, the actions of $g$ and $w$ can be simultaneously 
diagonalized.
\end{proof}

\vspace{8pt}

Using the above Lemma we may write
\begin{equation}\label{mq2}
\Ch^g_{BV}(\sigma_b,X) = 
e^{iD\theta^g}\Ch(\mathbb{A}_E,\sigma_b^g,X)D_\C(\overline{N}(g),X),
\end{equation}
using
\begin{equation*}
\det\nolimits_\C(1-g^\E\cdot(j^*e^{-\mathbb{F}(\mathbb{A}_E\oplus\nabla)(X)})) 
= 
\det\nolimits_\C(1-e^{-\mathbb{F}(\mathbb{A}_E)(X)})\det\nolimits_\C(1-g^N 
\cdot e^{-F(\nabla_N)(X)}),
\end{equation*}
since $g$ acts trivially on $T^*M(g)$.

The form $\Ch(\mathbb{A}_E,\sigma_b^g,X)$ appearing in (\ref{mq2}) is 
simply 
the 
Mathai-Quillen form $\Ch_{MQ}(\sigma_b^g,X)$ on $E^{0,1}(g)$.  We again 
use Theorem \ref{tpv2} and Lemmas \ref{l3} and \ref{l6} as follows:

The class $e^{iD\theta^g(X)}\Ch_{MQ}(\sigma_b^g,X) = 
\Ch(\mathbb{A}_E,\sigma_b^g,\theta^g,X)\in 
\mathcal{A}^\infty_{rdm}(\mathfrak{h},T^*M(g))$ represents
$\Ch_{sup}(\sigma_b^g,\theta^g,X)\in\mathcal{H}^{-\infty}_{supp(\sigma_b^g)\cap 
C_{\theta^g}}(\mathfrak{h},T^*M(g))$ in
$\mathcal{H}^{-\infty}_{rdm}(\mathfrak{h},T^*M(g))$,
and we have
\begin{equation*}
\Ch_{sup}(\sigma^g,\theta^g,X) = \Ch_{sup}(\sigma^g,\lambda^g,X) = 
P_{\lambda^g}(X)\Ch_{sup}(\sigma^g,X).
\end{equation*}
Since a representative of $\Ch_{sup}(\sigma_b^g,X)$ is 
$\Ch_{MQ}(\sigma_b^g,X)$,
we have
\begin{equation}\label{ch}
\Ch_{BV}(\sigma_b,X)=P_{\lambda^g}(X)\Ch_{MQ}(\sigma_b^g,X)D_\C(\overline{N}(g),X)
\end{equation}
in $\mathcal{H}^{-\infty}_{supp(\sigma_b^g)\cap 
C_{\theta^g}}(\mathfrak{h},T^*M(g))$.

As before, $\Ch_{MQ}^g(\sigma_b)$ is the pull-back of a form on $E^*(g)$, 
while $P_{\lambda^g}$ depends only on $\R\alpha^g$.
Integrating $P_{\lambda^g}$ over $\R$ proceeds the same as in the proof of 
Theorem \ref{t1}, giving $2\pi i\mathcal{J}(E(g),X)$ as the 
result.

Finally, we substitute (\ref{ch}) into (\ref{bv}) and use (\ref{tdth}) and Lemma \ref{l101} to obtain (\ref{fpf}).
\end{proof}

Suppose now that $(B,\omega,\Phi)$ is a Hamiltonian $G$-manifold, which is pre-quantizable in the sense of \cite{GGK}.
Let $\mathbb{L}\rightarrow B$ be a $G$-equivariant pre-quantum line bundle, and let $\pi:M\rightarrow B$ be the unit circle bundle inside of $\mathbb{L}$ with respect to a Hermitian metric.  A {\em $G$-equivariant pre-quantization} of $(B, \omega, \Phi)$ is defined in \cite{GGK} to be the pair $(M,\tilde{\alpha})$, where $\tilde{\alpha}$ is a connection form on $M$, such that $iD\tilde{\alpha}(X) = \pi^*(\omega-\Phi(X))$.\footnote{We are using the convention here that $M$ is a principal $U(1)$-bundle, and that $\mathfrak{u}(1) = i\R$.}

Let $\dob$ denote the Dolbeault-Dirac operator on sections of $\bigwedge T^{0,1}B$, and let $\sigma_m = \sigma(\dob)\otimes\Id_{\mathbb{L}^{\otimes m}}$. 

Since the form $\alpha = i\tilde{\alpha}$ is a contact form on $M$, and the action of $G\times U(1)$ on $M$ is elliptic with respect to $E=\ker(\alpha)$, our index formula in this case provides the following:

\begin{corollary}\label{pb}
We have the following equality of generalized functions on $G\times U(1)$:
\begin{equation}
\ind^{G\times U(1)}(\dirac)(g,u) = \label{corr} 
\sum\limits_{m\in \Z} u^{-m}\ind^G(\sigma_{m})(g).
\end{equation}
\end{corollary}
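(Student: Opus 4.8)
\textbf{Proof proposal for Corollary \ref{pb}.}
The plan is to evaluate the fixed-point formula \eqref{a} applied to the $G\times U(1)$-action on $M$, and then to recognize the resulting expression, term by term in the Fourier expansion along $U(1)$, as the Berline-Vergne formula for $\ind^G(\sigma_m)$ on $B$. First I would fix $(g,u)\in G\times U(1)$ and apply Theorem \ref{main}: the fixed-point set $M(g,u)$ of a pair, together with its contact structure $E(g,u)$, its normal bundle $N(g,u)$, and the restricted contact form $\alpha^{(g,u)}$, feeds into \eqref{fpf}. The key geometric observation is that $M\to B$ is a principal $U(1)$-bundle with connection $\tilde\alpha$, so $d\tilde\alpha = \pi^*\omega$ (on the nose, in the abelian case) and $E = \ker\alpha$ is exactly the horizontal distribution; hence $(E,J)$ is pulled back from $(TB, J_B)$, giving $\Td(E(g),X) = \pi^*\Td(T^{0,1}B(g),X)$ and $N(g,u)$ identified with the pullback of the normal bundle of $B(g)$ in $B$.

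The second step is to unwind the delta-function factor $\mathcal{J}(E(g,u),X)$ using the prequantization identity $iD\tilde\alpha(X) = \pi^*(\omega - \Phi(X))$, i.e. $D\alpha(X) = -\pi^*(\omega-\Phi(X))$ after writing $\alpha = i\tilde\alpha$. Then $\mathcal{J}(E,X) = \alpha\wedge\delta_0(D\alpha(X)) = i\tilde\alpha\wedge\delta_0(-\pi^*(\omega-\Phi(X)))$, and the delta distribution in the fibre variable of $M\to B$ is what converts the integral over $M$ into a sum over $\Z$ via Poisson summation — exactly as in Example 1 and the last line of \eqref{rr}. Concretely, integrating over the circle fibre and expanding $\delta_0$ (or its translate) as $\sum_{m\in\Z} e^{im(\,\cdot\,)}$ produces $\sum_m u^{-m}$ times the integral over $B(g)$ of $\Td(T^{0,1}B(g),X)\,e^{-im(\omega-\Phi(X))}/D_\C(N_B(g),X)$, up to the correct power of $(2\pi i)$. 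I would then match the exponent: $e^{-im(\omega-\Phi(X))}$ is precisely $\Ch^g(\mathbb{L}^{\otimes(-m)},X)$ restricted to $B(g)$, since $\mathbb{L}$ has equivariant curvature $-i(\omega-\Phi(X))$ by the prequantization condition and $g$ acts on the fibres of $\mathbb{L}^{\otimes -m}|_{B(g)}$ — but here I should be careful: on $B(g)$ the relevant scalar is $g^{\mathbb{L}}$, and the Fourier variable $u$ carries the $U(1)$-weight, so the $u^{-m}$ bookkeeping must absorb exactly the $U(1)$-action while $e^{-im\Phi(X)}g^{\mathbb{L}}{}^{-m}$ becomes the equivariant Chern character of $\mathbb{L}^{\otimes -m}$ evaluated at the group element $g e^X$.

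Finally, I would invoke the Atiyah-Singer fixed-point formula (the $G$-equivariant index theorem, which is the elliptic specialization of \eqref{bv} and is standard) for the Dolbeault-Dirac operator twisted by $\mathbb{L}^{\otimes -m}$: its germ at $g$ is $\int_{B(g)} (2\pi i)^{-\dim_\C B(g)}\,\Td(T^{0,1}B(g),X)\,\Ch^g(\mathbb{L}^{\otimes -m},X)/D_\C(N_B(g),X)$, which is exactly the term-by-term integrand obtained above. This identifies the $m$-th coefficient with $\ind^G(\sigma_m)(g e^X)$. The main obstacle I anticipate is purely bookkeeping of conventions: the sign in $D\alpha(X) = -\pi^*(\omega-\Phi(X))$, the direction of the $U(1)$-weight (whether it is $m$ or $-m$, hence $u^{-m}$ versus $u^m$), the power of $\sqrt{2}$ hidden in $\dirac = \sqrt 2(\bar\partial_b + \bar\partial_b^*)$ versus $\dob$, and verifying that the power of $(2\pi i)$ from $k(g) = \dim_\C B(g)$ on the $M$-side ($\dim M(g,u) = 2k(g)+1$) matches $\dim_\C B(g)$ on the $B$-side after the circle integration eats one dimension. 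Once the normalizations are pinned down, the equality of generalized functions on $G\times U(1)$ follows since two generalized functions with the same germ at every point of a compact group agree, and convergence of $\sum_m u^{-m}\ind^G(\sigma_m)(g)$ in the distributional sense is guaranteed because each $\ind^G(\sigma_m)$ is a trace-class character and the sum is a tempered distribution by the Poisson summation interpretation.
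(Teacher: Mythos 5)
Your overall approach matches the paper's: apply the fixed-point formula (\ref{fpf}) at $(g,u)$, use that $E$, $\Td$ and $N$ pull back from $B$, rewrite $\mathcal{J}$ via the pre-quantization identity, and convert the fibre $\delta_0$ into a Fourier series in the $U(1)$-variable by Poisson summation. The sign and normalization issues you flag are indeed routine (in the paper's conventions $D\alpha(X) = +\pi^*(\omega-\Phi(X))$, not $-$).

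There is, however, a genuine gap in your final identification ``the $m$-th coefficient equals $\ind^G(\sigma_m)(ge^X)$.'' Your $M$-side computation is an integral over the twisted fixed-point set $M(g,u) = \{y : g\cdot y = y\cdot u\}$, whose $U(1)$-quotient is $B(g)^u := M(g,u)/U(1)$. In general $B(g)$ is a finite disjoint union of the $B(g)^v$ over all $v\in U(1)$ with $M(g,v)\neq\emptyset$, and $g$ acts on $\mathbb{L}|_{B(g)^v}$ by the scalar $v$. So after Poisson summation your left-hand side is $\sum_m u^{-m}e^{-im\phi}$ times an integral over $B(g)^u$ only, whereas the Atiyah--Singer germ of $\ind^G(\sigma_m)$ at $g$ is an integral over all of $B(g)$, with each $B(g)^v$-piece carrying an extra factor $v^m$ from $\Ch_g(\mathbb{L}^{\otimes m})$. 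Term by term in $m$, the two sides do not agree.

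What closes the gap --- and what the paper does explicitly --- is to note that for $v\neq u$ the contribution of $B(g)^v$ to $\sum_m u^{-m}e^{-im\phi}\ind^G(\sigma_m)(ge^X)$ carries a factor $\sum_m (v/u)^m e^{im(\omega-\phi)}$, a periodic delta whose degree-zero part is supported away from $\phi = 0$ when $v\neq u$, hence vanishes as a germ at $(g,u)$. Only the $v=u$ stratum survives, and that is exactly what the $M(g,u)$-integral produces. You gesture at this issue (``on $B(g)$ the relevant scalar is $g^{\mathbb{L}}$ and $u^{-m}$ must absorb it'') but never write down the decomposition of $B(g)$ into the strata $B(g)^v$ or argue that the $v\neq u$ strata drop out of the right-hand side. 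That cancellation is a real idea, not mere bookkeeping, and must be supplied before the germ comparison at $(g,u)$ is complete.
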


\begin{proof}
With the right identifications, this result can be viewed as a special case of Th\'eor\`eme 25 in \cite{BV2} for $H=U(1)$, and the details of the proof are similar. 

We need to check that, for any fixed $(g,u)\in G\times U(1)$, the formula holds in a sufficiently small neighbourhood of $(g,u)$ in $G(g)\times U(1)$.  That is, for $X\in\mathfrak{g}(g)$ and $\phi\in\R$ sufficiently small, we need to show that
\begin{equation}
\ind^{G\times U(1)}(\dirac)(ge^X,ue^{i\phi})= \label{goal} \sum\limits_{m\in\Z}u^{-m}e^{-im\phi}\ind^G(\sigma_{m})(ge^X).
\end{equation}
For any $v\in U(1)$, we have $M(g,v) = \{y\in M|g\cdot y = y\cdot v\}$.  When $M(g,v)$ is non-empty, $U(1)$ acts freely on $M(g,v)$, and we denote $B(g)^v = M(g,v)/U(1)$. The fixed-point set $B(g)$ is a (finite) disjoint union of the spaces $B(g)^v$.

Since $\mathbb{L} \cong M\times_{U(1)}\C$, the action of $g\in G$ on the fibres of $\mathbb{L}|_{B(g)^v}$ is scalar multiplication by $v\in U(1)$. Thus, $\Ch_g(\mathbb{L}^{\otimes m},X)|_{B(g)^v} = v^me^{im\omega(X)}$, and we have
\begin{equation*}
\ind^G(\sigma_{m})(ge^X) = \sum\limits_{\substack{v\in U(1)\\M(g,v)\neq \emptyset}}\:\int\limits_{B(g)^v}(2\pi i)^{-k(g)}\frac{\Td(B(g)^v,X)}{D_\C(N_B(g),X)}v^me^{im\omega(X)}.
\end{equation*}
Thus, the only contribution to the right-hand side of (\ref{goal}) comes from $B(g)^u$ (provided $M(g,u)$ is non-empty), in which case we can apply the Poisson summation formula to obtain
\begin{multline*}
\sum\limits_{m\in\Z}u^{-m}e^{-im\phi}\ind^G(\sigma_m)(ge^X) =\\ \int\limits_{B(g)^u}(2\pi i)^{-k(g)}\frac{\Td(B(g)^u,X)\delta_0(\omega^g(X)-\phi)}{D_\C(N_B(g),X)}.
\end{multline*}

Using the index formula (\ref{fpf}), the left-hand side of (\ref{goal}) is given by
\begin{equation*}
\int\limits_{M(g,u)}(2\pi i)^{-k(g)}\frac{\Td(E(g,u),(X,\phi))\,\alpha^{g,u}\delta_0(D\alpha^{g,u}(X)-\phi)}{D_\C(N_M(g,u),(X,\phi))}.
\end{equation*}
The pre-quantization condition implies that $D\alpha^{g,u}(X,i\phi) = \pi^*\omega^g(X) - \phi$, and since the forms $\Td(E(g,u))$ and $D_\C(N_M(g,u))$ are the pullback to $M(g,u)$ of the corresponding forms on $B(g)^u$, the result follows.
\end{proof}

\subsection*{Acknowledgements}
Many of the ideas in this paper, including the statements of the main 
theorems and an outline of their proof, were 
first suggested by my thesis advisor, Eckhard Meinrenken.  I am grateful 
to him for suggesting this problem, and for much helpful advice and 
feedback on this paper.

I would also like to thank Paul-Emile Paradan for his helpful discussion 
while at Utrecht in the summer of 2007.  
The special case of Corollary \ref{pb} above was suggested by both Meinrenken 
and Paradan.

\bibliographystyle{plain}
\bibliography{index}

\end{document}